\documentclass[12pt,a4paper]{article}
\usepackage[a4paper]{geometry}

\usepackage{tikz}

\usepackage{eucal}

\usepackage[all]{xy}

\usepackage{amssymb, eucal, amsmath, amsthm}

\usepackage{mathptmx}
\usepackage[scaled=.90]{helvet}
\usepackage{courier}

\parindent0pt 
\parskip10pt

\sloppy

\setcounter{secnumdepth}{2}

\lefthyphenmin=3
\righthyphenmin=3

\clubpenalty=10000
\widowpenalty=10000





\usepackage[pdftex]{hyperref}

\hypersetup{
  colorlinks = true,
  linkcolor = black,
  urlcolor = blue,
  citecolor = black,
}


\renewcommand{\phi}{\varphi}

\newcommand{\op}{\mathrm{op}}

\newcommand{\bfB}{\mathbf{B}}
\newcommand{\bfC}{\mathbf{C}}
\newcommand{\bfD}{\mathbf{D}}
\newcommand{\bfF}{\mathbf{F}}
\newcommand{\bfG}{\mathbf{G}}
\newcommand{\bfM}{\mathbf{M}}

\newcommand{\bfZ}{\mathbf{Z}}

\newcommand{\bbB}{\mathbb{B}}
\newcommand{\bbC}{\mathbb{C}}
\newcommand{\bbD}{\mathbb{D}}
\newcommand{\bbG}{\mathbb{G}}
\newcommand{\bbM}{\mathbb{M}}
\newcommand{\bbN}{\mathbb{N}}
\newcommand{\bbT}{\mathbb{T}}
\newcommand{\bbZ}{\mathbb{Z}}

\newcommand{\rmA}{\mathrm{A}}
\newcommand{\rmB}{\mathrm{B}}
\newcommand{\rmC}{\mathrm{C}}
\newcommand{\rmE}{\mathrm{E}}
\newcommand{\rmH}{\mathrm{H}}
\newcommand{\rmL}{\mathrm{L}}
\newcommand{\rmM}{\mathrm{M}}
\newcommand{\rmR}{\mathrm{R}}
\newcommand{\rmS}{\mathrm{S}}
\newcommand{\rmU}{\mathrm{U}}
\newcommand{\rmZ}{\mathrm{Z}}

\newcommand{\rmd}{\mathrm{d}}

\newcommand{\frK}{\mathfrak{K}}

\newcommand{\bfVec}{\mathbf{Vec}}
\newcommand{\bfHo}{\mathbf{Ho}}
\newcommand{\bfFun}{\mathbf{Fun}}
\newcommand{\bfMor}{\mathbf{Mor}}
\newcommand{\rmMor}{\mathrm{Mor}}
\newcommand{\bfEnd}{\mathbf{End}}

\DeclareMathOperator{\id}{id}
\DeclareMathOperator{\Id}{Id}

\DeclareMathOperator{\Hex}{Hex}
\DeclareMathOperator{\Rep}{Rep}

\DeclareMathOperator{\GL}{GL}
\DeclareMathOperator{\Aut}{Aut}
\DeclareMathOperator{\Iso}{Iso}
\DeclareMathOperator{\Ker}{Ker}
\DeclareMathOperator{\Map}{Map}


\newtheorem{theorem}{Theorem}
\newtheorem{proposition}[theorem]{Proposition}
\newtheorem{corollary}[theorem]{Corollary}

\theoremstyle{definition}
\newtheorem{definition}[theorem]{Definition}
\newtheorem{example}[theorem]{Example}

\newtheorem{remark}[theorem]{Remark}

\numberwithin{theorem}{section}
\numberwithin{equation}{section}

\newcommand{\ot}{\otimes}
\newcommand{\del}{\partial}


\title{Drinfeld centers for bicategories}

\author{Ehud Meir and Markus Szymik}

\date{December 2014}

\begin{document}

\maketitle

\renewcommand{\abstractname}{}

\begin{abstract}%
\noindent
We generalize Drinfeld's notion of the center of a tensor category to bicategories. In this generality, we present a spectral sequence to compute the basic invariants of Drinfeld centers: the abelian monoid of isomorphism classes of objects, and the abelian automorphism group of its identity object. There is an associated obstruction theory that explains the difference between the Drinfeld center and the center of the classifying category. For examples, we discuss bicategories of groups and bands, rings and bimodules, as well as fusion categories. 

\vspace{\baselineskip}
\noindent MSC: 
18D05 (primary),  	
55T99 (secondary).	

\vspace{\baselineskip}
\noindent Keywords: Drinfeld centers, bicategories, spectral sequences, obstruction theory, bands, bimodules, fusion categories.
\end{abstract}


\section*{Introduction}

If~$M$ is a monoid, then its center~$\rmZ(M)$ is the abelian submonoid of elements that commute with all elements. 
Monoids are just the (small) categories with only one object. It has therefore been natural to ask for a generalization of the center construction to categories~$\bfC$. 
The resulting notion is often referred to as the {\em Bernstein center}~$\rmZ(\bfC)$ of~$\bfC$, see~\cite[II~\S2]{Bass}, \cite[II.5, Exercise~8]{MacLane} as well as~\cite[1.9]{Bernstein}, for example. It is the abelian monoid of natural transformations~$\Id(\bfC)\to\Id(\bfC)$, 
so that its elements are the families~$(p(x)\colon x\to x\,|\,x\in\bfC)$ of self-maps that commute with all morphisms in~$\bfC$. 
Centers in this generality have applications far beyond those provided by monoids alone. As an example which is at least as important as elementary, for every prime number~$p$ the center of the category of commutative rings in characteristic~$p$ is freely generated by Frobenius, exhibiting Frobenius as a universal symmetry of commutative algebra in prime characteristic.

Drinfeld, Joyal, Majid, Street, and possibly others have generalized the notion of center in a different direction, 
from monoids to (small) monoidal (or tensor) categories, 
see~\cite[Definition~3]{Joyal+Street:tortile} 
and~\cite[Example~3.4]{Majid}. 
The resulting notion is often referred to as the {\em Drinfeld center}. Since tensor categories are just the bicategories with only one object, it is therefore natural to ask for a generalization of the Drinfeld center construction to bicategories. This will be the first achievement in the present paper. 

After we have set up our conventions and notation for bicategories in Section~\ref{sec:review}, Section~\ref{sec:refs_and_props} contains our definition and the main properties of the Drinfeld center of bicategories: The center is a braided tensor category that is invariant under equivalences. In the central Section~\ref{sec:ss}, we will explain a systematic method (a spectral sequence) to compute the two primary invariants of the Drinfeld center of every bicategory as a braided tensor category: the abelian monoid of isomorphism classes of objects, and the abelian group of automorphisms of its unit object. 

We will also explain the relation of the Drinfeld center with a more primitive construction: the center of the classifying category. These are connected by a characteristic homomorphism~\eqref{eq:characteristic} that, in general, need not be either injective or surjective. As an explanation of this phenomenon, we will see that the characteristic homomorphism can be interpreted as a fringe homomorphism of our spectral sequence. The word~`fringe' here refers to the fact that spectral sequences in non-linear contexts only rarely have a well-defined edge. As in~\cite{Bousfield}, this will lead us into an associated obstruction theory that will also be explained in detail. 

The final Sections~\ref{sec:groups},~\ref{sec:fusion}, and~\ref{sec:bimodules} discuss important examples where our spectral sequence can be computed and where it sheds light on less systematic approaches to computations of centers: the~$2$-category of groups, where the~$2$-morphisms are given by conjugations, fusion categories in the sense of~\cite{ENO}, where Drinfeld centers have been in the focus from the beginning of the theory on, and the bicategory that underlies Morita theory: the bicategory of rings and bimodules .


\section{A review of bicategories}\label{sec:review}

To fix notation, we review the definitions and some basic examples of~$2$-categories and bicategories in this section. 
Some useful references for this material are \cite{Benabou}, \cite{Kelly+Street}, \cite{Kapranov+Voevodsky}, \cite{Street}, \cite{Lack}, and of course~\cite[Chapter~XII]{MacLane}. 
For clarity of exposition, we will use different font faces for ordinary categories and~$2$-categories/bicategories.

Ordinary categories will be denoted by boldface letters such as~$\bfC,\bfD,\dots$. and their objects will be denoted by~$x,y,\dots$. 
The set of morphisms in~$\bfC$ from~$x$ to~$y$ will be denoted by~$\rmMor_\bfC(x,y)$, or sometimes~$\bfC(x,y)$ for short. We will write~$x\in\bfC$ to indicate that~$x$ is an object of~$\bfC$. 
If~$\bfC$ is a small ordinary category, then~$\Iso(\bfC)$ will be the set of isomorphism classes of objects. If~$x\in\bfC$ is any object, then~$\Aut_\bfC(x)$ 
will be its automorphism group in~$\bfC$.

\begin{definition} A~{\em$2$-category} (or sometimes: {\em strict~$2$-category}) is a category enriched in small categories. This is a category~$\bbB$ in which for every two objects~(or~$0$-morphisms)~$x,y$ in~$\bbB$ the morphism set is the underlying object set of a given small category~$\bfMor_{\bbB}(x,y)$; there are identity objects, and a composition functor that satisfy the evident axioms.
\end{definition}

\begin{example}
A basic example of a~(large)~$2$-category has a objects the (small) categories, and the categories of morphisms are the functor categories~$\bfFun(\bfC,\bfD)$ 
with natural transformations as morphisms. We will write~$\bfEnd(\bfC)=\bfFun(\bfC,\bfC)$ for short.
\end{example}

Bicategories (or sometimes: lax/weak~$2$-categories) are similar to~$2$-categories, 
except that the associativity and identity properties are not given by equalities, but by natural isomorphisms.
More precisely, we have the following definition.

\begin{definition}
A {\em bicategory}~$\bbB$ consists of
\begin{itemize}
\item objects (the~{\em$0$-morphisms})~$x,y,...$,
\item a category~$\bfMor_{\bbB}(x,y)$ of morphisms (the~{\em$1$-morphisms}) for every ordered pair of objects~$x,y$ of~$\bbB$,
\item a functor, the {\em horizontal composition},
\begin{align*}
\bfMor_{\bbB}(y,z)\times \bfMor_{\bbB}(x,y)&\longrightarrow\bfMor_{\bbB}(x,z),\\
(M,N)&\longmapsto M\ot N,
\end{align*}
\item and an identity object~$\Id(x)\in \bfMor_{\bbB}(x,x)$ for every object~$x$.
\end{itemize}
We also require natural transformations~$\alpha$,~$\lambda$ and~$\rho$ of functors that make the canonical associativity and identity diagrams commute.
\end{definition}

\begin{example}
A basic example of a~(large)~bicategory has as objects the (small) categories, and the categories of morphisms are the bimodules (or pro-functors or distributors), 
see \cite[Proposition~7.8.2]{Borceux1}.
\end{example}

Bicategories will be denoted by blackboard bold letters such as~$\bbB,\bbC,\dots$. The objects~(that is, the~$0$-morphisms) of a bicategory will be denoted by small letters~$x,y,...$. Objects in~$\bfMor_{\bbB}(x,y)$ (that is, the~$1$-morphisms in~$\bbB$) will be denoted by capital letters~$M,N,\ldots$, and we will sometimes write~\hbox{$\bbB(x,y)=\bfMor_{\bbB}(x,y)$} for short.

Every category of the form~$\bfMor_\bbB(x,x)$ is a tensor category with respect to~$\otimes$ and~$\Id(x)$:

\begin{example}\label{ex:tensor_categories}
We refer to \cite[Chapter~VII]{MacLane} and \cite{Joyal+Street:geometry} for introductions to tensor~(or monoidal) categories. 
Every tensor category~$\bfM$ defines a bicategory~$\bbB(\bfM)$ with one just object, which will be denoted by~$\star$:
\[
\bfMor_{\bbB(\bfM)}(\star,\star)=\bfM.
\] 
Conversely, every bicategory with precisely one object is of this form. 
Examples of the form~$\bfEnd(\bfC)=\bfFun(\bfC,\bfC)$ for small categories~$\bfC$ should be thought of as typical in the sense that the product need not be symmetric. 
Weaker notions, such as braidings~\cite{Joyal+Street:braided}, will be described later when needed.
\end{example}

\begin{example}
On the one hand, every category~$\bfC$ defines a (`discrete') bicategory~$\bbD(\bfC)$, 
where the morphism sets~$\rmMor_\bfC(x,y)$ from~$\bfC$ are interpreted as categories~$\bfMor_{\bbD(\bfC)}(x,y)$ with only identity arrows. 
These examples are in fact always~$2$-categories.
\end{example} 

\begin{definition}
Every bicategory~$\bbB$ determines an ordinary category~$\bfHo(\bbB)$, its {\em classifying category}, as follows~\cite[Section~7]{Benabou}: The objects of~$\bfHo(\bbB)$ and~$\bbB$ are the same; the morphism set from~$x$ to~$y$ in the classifying category~$\bfHo(\bbB)$ is the set of isomorphism classes of objects in the morphism category~$\bfMor_\bbB(x,y)$. In the notation introduced before,
\[
\rmMor_{\bfHo(\bbB)}(x,y)=\Iso(\bfMor_\bbB(x,y)).
\]
The associativity and identity constraints for~$\bbB$ prove that horizontal composition provides~$\bfHo(\bbB)$ with a the structure of an ordinary category such that the isomorphism classes of the~$\Id(x)$ become the identities.
\end{definition}

\begin{remark}
The classifying category of a bicategory is not to be confused with the {\em Poincar\'e category} of a bicategory, see~\cite[Section~7]{Benabou} again.
\end{remark}

\begin{example}
If~$\bfM$ is a tensor category, and~$\bbB=\bbB(\bfM)$ is the associated bicategory with one object, then~$\bfHo(\bbB)$ is the monoid~$\Iso(\bfM)$ of isomorphism classes of objects in~$\bfM$, thought of as an ordinary category with one object.
\end{example}

\begin{example}
If~$\bbB=\bbD(\bfC)$ is a discrete bicategory defined by an ordinary category~$\bfC$, then the classifying category~$\bfHo(\bbB)=\bfC$ gives back the ordinary category~$\bfC$.
\end{example}

\begin{example}
There is a bicategory~$\bbT$ that has as objects the topological spaces, as~$1$-morphisms the continuous maps, and as~$2$-morphisms the homotopy classes of homotopies between them. In other words, one may think of~$\bfMor_\bbT(X,Y)$ as the fundamental groupoid of the space of maps~$X\to Y$ (with respect to a suitable topology). The bicategory~$\bbT$ is actually a~$2$-category. Its classifying category~$\bfHo(\bbT)$ is the homotopy category of topological spaces with respect to the (strong) homotopy equivalences. 
\end{example}

The preceding example explains our choice of notation~$\bfHo(\bbB)$ for general bicategories~$\bbB$.

We will not recall the appropriate notions of functors and natural transformations for bicategories here, but we note that every bicategory is equivalent to a~$2$-category, 
see~\cite{MacLane+Pare}.


\section{Drinfeld centers for bicategories}\label{sec:refs_and_props}

Drinfeld centers for tensor categories were introduced independently by Drinfeld, Majid~\cite[Example~3.4]{Majid}, and Joyal-Street~\cite[Definition~3]{Joyal+Street:tortile}. 
In this section, we extend their definition and basic properties to bicategories.

\subsection{Definition}

Drinfeld centers for bicategories are defined as follows.

\begin{definition}\label{def:Drinfeld_center}
Let~$\bbB$ be a (small) bicategory. Its {\em Drinfeld center}~$\bfZ(\bbB)$ is the following ordinary category. The objects in~$\bfZ(\bbB)$ are pairs~$(P,p)$ where
\[
P=(P(x)\in\bfMor_\bbB(x,x)\,|\,x\in\bbB)
\]
is a family of objects in the endomorphism (tensor) categories~$\bfMor_\bbB(x,x)$, one for each object~$x\in\bbB$, and
\[
p=(p(M)\colon P(y)\otimes M\overset{\cong}{\longrightarrow} M\otimes P(x)\,|\,x,y\in\bbB, M\in\bfMor_\bbB(x,y))
\] 
is a family of natural isomorphisms in~$\bfMor_\bbB(x,y)$, one for each object~$M\in\bfMor_\bbB(x,y)$.
These pairs of families have to satisfy two conditions: 
Firstly, the isomorphism
\[
p(\Id(x))\colon P(x)\otimes \Id(x)\to\Id(x)\otimes P(x)
\]
is the composition of the identity constraints~$\lambda$ and~$\rho$. Secondly, ignoring the obvious associativity constraints, there is an equality 
\begin{equation}\label{eq:triangle}
p(M\otimes N)=(\id(M)\otimes p(N))(p(M)\otimes\id(N))
\end{equation}
that holds between morphisms in the category~$\bfMor_\bbB(x,z)$ for all objects~$M\in\bfMor_\bbB(y,z)$ and~\hbox{$N\in\bfMor_\bbB(x,y)$} that are horizontally composeable.

A {\em morphism} from~$(P,p)$ to~$(Q,q)$ in the category~$\bfZ(\bbB)$ is a family of morphisms
\[
f(x)\colon P(x)\to Q(x)
\]
in the categories~$\bfMor_\bbB(x,x)$ such that the diagram
\[
\xymatrix{
P(y)\otimes M\ar[d]_{f(y)\otimes\id(M)}\ar[r]^{p(M)}&M\otimes P(x)\ar[d]^{\id(M)\otimes f(x)}\\
Q(y)\otimes M\ar[r]_{q(M)}&M\otimes Q(x)
}
\]
commutes for all objects~$M\in\bfMor_\bbB(x,y)$. Identities and composition in the category~$\bfZ(\bbB)$ are defined so that there is a faithful functor
\[
\bfZ(\bbB)\longrightarrow\prod_{x\in\bbB}\bfMor_\bbB(x,x)
\]
to the product of the endomorphism categories which is forgetful on objects.
\end{definition}

\begin{remark}\label{rem:higher}
Equation~\eqref{eq:triangle} means that the triangles 
\[
\xymatrix@R=30pt@C=10pt{
&M\otimes P(y)\otimes N\ar[dr]^{\id(M)\otimes p(N)}&\\
P(z)\otimes M\otimes N\ar[ur]^{p(M)\otimes\id(N)}\ar[rr]_{p(M\otimes N)}&&M\otimes N\otimes P(x)
}
\]
in the category~$\bfMor_\bbB(x,z)$ commute on the nose, again ignoring the given associativity constraints. 
This means that the bottom arrow is {\em equal} to the composition of the other two. For bicategories as defined here, 
there is no other notion of {\em equivalence} between morphisms in~$\bfMor_\bbB(x,z)$, so this is the appropriate definition in our situation. 
Further generalization--with even higher order structure~$(P^0,P^1,P^2,\dots)$ instead of just~$(P,p)$--is called for in higher categories. This has been developed for the context of simplicial categories in~\cite{Szymik}. A detailed comparison, while clearly desirable, is not within the scope of the present text. We focus entirely on the categorical situation here.
\end{remark}

\begin{example}
By inspection, if~$\bbB=\bbB(\bfM)$ is a tensor category, thought of as a bicategory with one object as in Example~\ref{ex:tensor_categories}, 
then our definition recovers the Drinfeld center as defined in~\cite[Example~3.4]{Majid} and~\cite[Definition~3]{Joyal+Street:tortile}.
\end{example}

Other examples related to categories of groups, bands, and fusion categories will be discussed later, see Section~\ref{sec:groups} and Section~\ref{sec:fusion}, respectively.


\subsection{Basic properties}

We now list the most basic properties of Drinfeld centers for bicategories: They are invariant under equivalences, 
and carry canonical structures of braided tensor categories. All of these are straightforward generalizations from the one-object case of tensor categories.

\begin{proposition}
For every (small) bicategory~$\bbB$, its Drinfeld center~$\bfZ(\bbB)$ has a canonical structure of a tensor category. The tensor product
\[
(P,p)\otimes(Q,q)=(P\otimes Q,p\otimes q)
\] 
is defined by
\[
(P\otimes Q)(x)=P(x)\otimes Q(x)
\]
and
\[
(p\otimes q)(M)=(p(M)\otimes\id(Q(x)))(\id(P(y))\otimes q(M))
\]
as morphisms
\[
P(y)\otimes Q(y)\otimes M\longrightarrow
P(y)\otimes M\otimes Q(x)\longrightarrow
M\otimes P(x)\otimes Q(x)
\]
for~$M\in\bfMor_\bbB(x,y)$.
The tensor unit is~$(E,e)$ with
\[
E(x)=\Id(x)
\]
and
\[
e(M)\colon\Id(y)\otimes M\cong M\cong M\otimes\Id(x)
\]
is given by the constraints of the tensor structure.
\end{proposition}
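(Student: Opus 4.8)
The plan is to verify that the proposed data genuinely define a tensor category by checking, in order: (1) that $(P\otimes Q, p\otimes q)$ is actually an object of $\bfZ(\bbB)$; (2) that $(E,e)$ is an object of $\bfZ(\bbB)$; (3) that $\otimes$ extends to morphisms making it a functor $\bfZ(\bbB)\times\bfZ(\bbB)\to\bfZ(\bbB)$; and (4) that the associativity and unit constraints of the underlying categories $\bfMor_\bbB(x,x)$ assemble into constraints on $\bfZ(\bbB)$ satisfying the pentagon and triangle axioms. The first of these is the crux and deserves the most care; the remaining ones are bookkeeping that follow from the corresponding facts in each $\bfMor_\bbB(x,x)$ together with naturality.

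For step (1), I first check that each $(p\otimes q)(M)$ is a natural isomorphism $P(y)\otimes Q(y)\otimes M \to M\otimes P(x)\otimes Q(x)$: it is a composite of the isomorphisms $p(M)$ and $q(M)$ tensored with identities, hence an isomorphism, and it is natural in $M$ because $p$ and $q$ are. Then I verify the two defining conditions of Definition~\ref{def:Drinfeld_center}. The identity condition on $p(\Id(x))$ and $q(\Id(x))$ composes to give the required constraint for $(p\otimes q)(\Id(x))$, using compatibility of $\lambda$ and $\rho$. The main obstacle is the hexagon-type condition~\eqref{eq:triangle} for $p\otimes q$: I must show $(p\otimes q)(M\otimes N)=(\id(M)\otimes(p\otimes q)(N))((p\otimes q)(M)\otimes\id(N))$. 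The strategy is to expand both sides using the definition of $p\otimes q$ and then apply~\eqref{eq:triangle} for $p$ and for $q$ separately, rearranging the four resulting factors past one another. The rearrangement is legitimate because $p(M)\otimes\id$ and $\id\otimes q(N)$ act on disjoint tensor factors, so they commute by the interchange law for the functor $\otimes$; this is exactly where one must be careful to ignore (that is, suppress but ultimately invoke) the associativity constraints $\alpha$, as the statement and Remark~\ref{rem:higher} warn.

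For step (3), given morphisms $f\colon(P,p)\to(P',p')$ and $g\colon(Q,q)\to(Q',q')$, I set $(f\otimes g)(x)=f(x)\otimes g(x)$ and check the compatibility square in the definition of a morphism of $\bfZ(\bbB)$; this again reduces to the squares for $f$ and for $g$ glued along a common edge, using naturality and the interchange law. Functoriality of $\otimes$ on $\bfZ(\bbB)$ (preservation of identities and composites) is then immediate from functoriality of $\otimes$ on each $\bfMor_\bbB(x,x)$, since the forgetful functor to $\prod_x\bfMor_\bbB(x,x)$ is faithful, so it suffices to check equalities after forgetting.

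For step (4), I take the associator on $\bfZ(\bbB)$ to be the family $(\alpha_{P(x),Q(x),R(x)})_{x}$ and the unit isomorphisms to be the families of $\lambda$ and $\rho$ componentwise. I must check that each such family is in fact a morphism in $\bfZ(\bbB)$, i.e.\ that it intertwines the relevant $p$-data; this is a naturality computation analogous to step (1). Once that is known, the pentagon and triangle axioms for $\bfZ(\bbB)$ hold because they hold in each $\bfMor_\bbB(x,x)$ and can be verified after applying the faithful forgetful functor to the product. I expect condition~\eqref{eq:triangle} for $p\otimes q$ in step (1) to be the only genuinely nontrivial point; everything else is a componentwise transport of structure along the faithful embedding into $\prod_{x\in\bbB}\bfMor_\bbB(x,x)$.
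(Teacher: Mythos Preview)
Your proposal is correct and in fact far more detailed than the paper's own proof, which consists entirely of the sentence ``See~\cite[XIII.4.2]{Kassel} for the case of tensor categories.'' Your outline is exactly the standard verification one finds there, adapted componentwise to the multi-object setting, and your identification of the hexagon condition~\eqref{eq:triangle} for $p\otimes q$ as the only nontrivial point (handled via the interchange law) is on target.
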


\begin{proof}
See~\cite[XIII.4.2]{Kassel} for the case of tensor categories.
\end{proof}

\begin{corollary}
The group~$\Aut_{\bfZ(\bbB)}(E,e)$ is abelian.
\end{corollary}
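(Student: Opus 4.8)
The plan is to forget almost all of the structure and to prove the more basic fact that, in any tensor category, the monoid of endomorphisms of the unit object is commutative; the Corollary then follows immediately by passing to the invertible elements. Concretely, I would run the Eckmann--Hilton argument on the set $S$ of endomorphisms of the unit $(E,e)$ in $\bfZ(\bbB)$, using only the tensor structure furnished by the preceding Proposition (the braiding is not needed, and in fact this is the cleaner order of logic).

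First I would equip $S$ with two binary operations. The first is ordinary composition $f\circ g$ in the category $\bfZ(\bbB)$, for which the identity $\id_{(E,e)}$ is evidently a two-sided unit. The second is the transported tensor product
\[
f\ast g=\lambda\circ(f\otimes g)\circ\lambda^{-1},
\]
where $\otimes$ is the tensor product of the preceding Proposition and $\lambda\colon(E,e)\otimes(E,e)\to(E,e)$ is the left unit constraint of the tensor category $\bfZ(\bbB)$. The crucial preliminary step is to check that $\id_{(E,e)}$ is a two-sided unit for $\ast$ as well, that is, $f\ast\id_{(E,e)}=f=\id_{(E,e)}\ast f$. This is exactly where the coherence of $\bfZ(\bbB)$ enters: the left identity uses naturality of $\lambda$, while the right identity uses naturality of the right unit constraint $\rho$ together with the coherence relation $\lambda=\rho$ on the unit object.

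With both operations sharing the unit $\id_{(E,e)}$, the interchange law
\[
(f\ast g)\circ(f'\ast g')=(f\circ f')\ast(g\circ g')
\]
follows at once by conjugating the bifunctoriality relation $(f\otimes g)\circ(f'\otimes g')=(f\circ f')\otimes(g\circ g')$ by $\lambda$. The Eckmann--Hilton argument then forces $\ast=\circ$ and shows that this common operation is commutative, so $S$ is a commutative monoid and $\Aut_{\bfZ(\bbB)}(E,e)$, being the group of its invertible elements, is abelian. I expect the only genuine bookkeeping to lie in the unit-axiom verification for $\ast$; once that is in place the argument is purely formal, applying verbatim to $\bfZ(\bbB)$ as soon as we know it is a tensor category, so in the write-up it may even be cleanest to state it as a remark about arbitrary tensor categories and specialize.
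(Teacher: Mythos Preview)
Your proposal is correct and takes essentially the same approach as the paper: both deduce the corollary from the general fact that the endomorphism monoid of the unit in any tensor category is commutative, applied to the tensor category $\bfZ(\bbB)$ furnished by the preceding Proposition. The paper simply cites this fact (Kassel, XI.2.4), whereas you spell out its proof via the Eckmann--Hilton argument; your sketch of that argument, including the coherence $\lambda=\rho$ on the unit needed to make $\id_{(E,e)}$ a unit for $\ast$, is accurate.
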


\begin{proof}
In every (small) tensor category, the endomorphism monoid of the tensor unit is abelian. See~\cite[XI.2.4]{Kassel}, for example.
\end{proof}

\begin{proposition}
For every (small) bicategory~$\bbB$, its Drinfeld center~$\bfZ(\bbB)$ has a canonical structure of a braided tensor category. The braiding
\[
(P,p)\otimes(Q,q)\longrightarrow(Q,q)\otimes(P,p)
\]
is defined by the morphisms
\[
p(Q(x))\colon P(x)\otimes Q(x)\longrightarrow Q(x)\otimes P(x)
\] 
for objects~$x\in\bbB$.
\end{proposition}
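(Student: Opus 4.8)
The plan is to verify the four defining properties of a braiding in turn, observing that each one is already packaged into Definition~\ref{def:Drinfeld_center}: the cocycle identity~\eqref{eq:triangle}, the naturality of every half-braiding $p$ in its argument $M$, and the normalization of $p(\Id(x))$ by the unit constraints. Since each component $p(M)$ is by definition an \emph{iso}morphism, every component $p(Q(x))$ of the proposed braiding is invertible, so it suffices to check that the formula $(P,p)\otimes(Q,q)\to(Q,q)\otimes(P,p)$, $x\mapsto p(Q(x))$, is (i) a well-defined morphism in $\bfZ(\bbB)$, (ii) natural in both variables, and (iii) compatible with the associativity and unit constraints through the two hexagon axioms.

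First I would show that the family $f(x)=p(Q(x))$ assembles into a morphism $(P\otimes Q,p\otimes q)\to(Q\otimes P,q\otimes p)$ in $\bfZ(\bbB)$; this is the only step with genuine content. Writing out the defining square of a center morphism at a $1$-morphism $M\in\bfMor_\bbB(x,y)$ and substituting the formula for $(p\otimes q)(M)$, the top-right composite $(\id(M)\otimes p(Q(x)))\circ(p(M)\otimes\id(Q(x)))\circ(\id(P(y))\otimes q(M))$ collapses, via~\eqref{eq:triangle} applied to $M\otimes Q(x)$, to $p(M\otimes Q(x))\circ(\id(P(y))\otimes q(M))$. Dually, the bottom-left composite collapses, via~\eqref{eq:triangle} applied to $Q(y)\otimes M$, to $(q(M)\otimes\id(P(x)))\circ p(Q(y)\otimes M)$. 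The two are then identified by the naturality of the half-braiding $p$ applied to the $2$-morphism $q(M)\colon Q(y)\otimes M\to M\otimes Q(x)$, so the square commutes.

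Naturality of the braiding in the second variable follows from naturality of $p$ applied to $g(x)\colon Q(x)\to Q'(x)$ for a center morphism $g$, while naturality in the first variable is exactly the defining square of a center morphism $f\colon(P,p)\to(P',p')$ evaluated at $M=Q(x)$. For the hexagons, the equality $(Q\otimes R)(x)=Q(x)\otimes R(x)$ shows that the braiding of $(P,p)$ past $(Q,q)\otimes(R,r)$ is $p(Q(x)\otimes R(x))$, which factors through $p(Q(x))$ and $p(R(x))$ precisely as~\eqref{eq:triangle} demands: this is one hexagon. The other hexagon, the braiding of $(P,p)\otimes(Q,q)$ past $(R,r)$, is $(p\otimes q)(R(x))$, and unwinds into $p(R(x))$ and $q(R(x))$ by the very definition of the tensor product of half-braidings. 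Finally, the condition that $p(\Id(x))$ be the composite of the unit constraints (the first axiom of Definition~\ref{def:Drinfeld_center}) shows that the braiding with the unit $(E,e)$ is given by $\lambda$ and $\rho$, as a braiding must be.

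The hard part is purely bookkeeping: throughout, the associativity and unit constraints $\alpha,\lambda,\rho$ have been suppressed, exactly as in~\eqref{eq:triangle} and Remark~\ref{rem:higher}, so to make the hexagon diagrams commute on the nose one must reinsert them and invoke Mac Lane's coherence theorem. The cleanest route is to reduce to the strict case: since $\bfZ(\bbB)$ is invariant under equivalence (as noted above) and every bicategory is equivalent to a $2$-category~\cite{MacLane+Pare}, I may assume $\bbB$ is a strict $2$-category, whereupon all constraints are identities and the manipulations above become literal equalities of $2$-morphisms. The braided-category axioms then hold verbatim, exactly as in the one-object case~\cite[XIII]{Kassel}.
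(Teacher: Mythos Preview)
Your proof is correct and is precisely the detailed verification that the paper elects not to write out: the paper's own proof is a one-line citation to the one-object case in \cite[XIII.4.2]{Kassel} and \cite[Proposition~4]{Joyal+Street:tortile}, whereas you have unpacked that argument and checked that it goes through verbatim in the multi-object setting. One small caveat: your final strictification step appeals to the equivalence-invariance of $\bfZ(\bbB)$, which in the paper is only recorded \emph{after} this proposition and is stated without specifying in which sense (tensor, braided) the invariance holds, so invoking it to transport the \emph{braided} structure is mildly circular; but this is harmless, since your direct verification of the hexagons via~\eqref{eq:triangle} already establishes the result without needing strictification.
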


\begin{proof}
See again~\cite[XIII.4.2]{Kassel} or~\cite[Proposition~4]{Joyal+Street:tortile} for the case of tensor categories.
\end{proof}

\begin{corollary}
The monoid~$\Iso(\bfZ(\bbB))$ is abelian.
\end{corollary}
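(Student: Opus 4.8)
The plan is to deduce commutativity directly from the braiding supplied by the preceding proposition, using the elementary principle that a braiding renders the monoid of isomorphism classes abelian.

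First I would recall what the assertion means. By definition, $\Iso(\bfZ(\bbB))$ is the set of isomorphism classes of objects of the category~$\bfZ(\bbB)$, and the tensor product of the previous proposition descends to a well-defined binary operation
\[
[(P,p)]\cdot[(Q,q)]=[(P,p)\otimes(Q,q)]
\]
on this set, with unit the class~$[(E,e)]$ of the tensor unit and associativity inherited from the associativity constraint of the tensor structure. This exhibits~$\Iso(\bfZ(\bbB))$ as a monoid, and the content of the corollary is exactly that this multiplication is commutative.

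Next I would invoke the braiding from the preceding proposition. For any two objects~$(P,p)$ and~$(Q,q)$ of~$\bfZ(\bbB)$, the braiding provides a morphism~$(P,p)\otimes(Q,q)\to(Q,q)\otimes(P,p)$ in~$\bfZ(\bbB)$, assembled componentwise from the isomorphisms~$p(Q(x))$. The key point is that each~$p(M)$ is required, in Definition~\ref{def:Drinfeld_center}, to be a natural \emph{isomorphism}; in particular~$p(Q(x))$ is invertible in~$\bfMor_\bbB(x,x)$ for every~$x\in\bbB$, so the braiding is an isomorphism in~$\bfZ(\bbB)$. Consequently~$(P,p)\otimes(Q,q)$ and~$(Q,q)\otimes(P,p)$ lie in the same isomorphism class, which is precisely the identity~$[(P,p)]\cdot[(Q,q)]=[(Q,q)]\cdot[(P,p)]$. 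This parallels the argument for the preceding corollary on~$\Aut_{\bfZ(\bbB)}(E,e)$, with the braiding now playing the role that commutativity of the unit endomorphisms played there.

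There is no substantial obstacle: all the genuine work has already been carried out in establishing that~$\bfZ(\bbB)$ is a braided tensor category. The only subtlety worth a word is that the braiding, being a morphism of the center, is genuinely an isomorphism \emph{there}, i.e.\ its inverse is again a morphism of~$\bfZ(\bbB)$ compatible with the structure maps; but this is part of what it means for the preceding proposition to exhibit a braiding, and it follows formally once each component~$p(Q(x))$ is invertible. I would therefore present this as a short consequence, recording that the same statement holds in any braided tensor category.
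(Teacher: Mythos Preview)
Your argument is correct and matches the paper's approach: the paper simply records that in any braided tensor category the monoid of isomorphism classes is abelian, and you have spelled out exactly why (the braiding furnishes an isomorphism between the two tensor products). Your version is more detailed but not substantively different.
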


\begin{proof}
In every (small) braided tensor category, the monoid of isomorphism classes of objects is abelian. 
\end{proof}

\begin{remark}
The Drinfeld center is invariant under equivalences of bicategories. This is shown for tensor categories in~\cite{Mueger}, even under the more general hypothesis that the tensor categories are weakly monoidal Morita equivalent. This will not be needed in the following.
\end{remark}



\section{Symmetries, deformations, and obstructions}\label{sec:ss}

In this section, we will explain a systematic method how to compute the two primary invariants of the Drinfeld center~$\bfZ(\bbB)$ of every bicategory~$\bbB$ as a braided tensor category: 
The abelian monoid~$\Iso(\bfZ(\bbB))$ of isomorphism classes of objects under~$\otimes$, and the abelian group~$\Aut_{\bfZ(\bbB)}(E,e)$ of automorphisms of its unit of object. We will also explain the relation of the Drinfeld center to a more primitive construction: the center of the classifying category. These are connected by the homomorphism
\begin{equation}\label{eq:characteristic}
\Iso(\bfZ(\bbB))\longrightarrow\rmZ(\bfHo(\bbB))
\end{equation}
that sends the isomorphism class of an object~$(P,p)$ with~$P=(P(x)\in\bfMor_\bbB(x,x)\,|\,x\in\bbB)$ to the family~$[\,P\,]=([\,P(x)\,]\in\Iso\bfMor_\bbB(x,x)\,|\,x\in\bbB)$ of isomorphism classes.

\begin{definition}
The canonical homomorphism \eqref{eq:characteristic} is called the {\em characteristic homomorphsim}.
\end{definition}

In general, the characteristic homomorphism need not be either injective or surjective. As an explanation of this phenomenon, we will see that the characteristic homomorphism can be interpreted as a fringe homomorphism of a spectral sequence~$(\rmE_r^{s,t}\,|\,r\geqslant1)$ with an associated obstruction theory. This spectral sequence will compute the abelian monoid~~$\Iso(\bfZ(\bbB))$ from~$\rmE_\infty^{s,t}$ with~$t-s=0$ and the abelian group~$\Aut_{\bfZ(\bbB)}(E,e)$ from~$\rmE_\infty^{s,t}$ with~$t-s=1$. Since~$\rmd_2$ is the last differential that may be non-zero, this is only a two-stage process.

The non-zero part of the~$\rmE_1$ page of the spectral sequence is not very populated. 
There are only five terms~$\rmE^{s,t}_1$ that can be non-zero, and only three~$\rmd_1$ differentials between them. The situation can be illustrated as follows.
\begin{center}
\begin{tikzpicture}[xscale=2]

\draw[gray] (-1, -0.5) -- (-1, 3.5);
\draw[->] (0, -0.5) -- (0, 3.5) node[above] {$s$};
\draw[gray] (1, -0.5) -- (1, 3.5);
\draw[gray] (2, -0.5) -- (2, 3.5);

\draw (-1.5, 0.5) node [fill=white] {$0$};
\draw (-1.5, 1.5) node [fill=white] {$1$};
\draw (-1.5, 2.5) node [fill=white] {$2$};

\draw[->] (-1.5, 0) -- (2.5, 0) node[right] {$t-s$};
\draw[gray] (-1.5, 1) -- (2.5, 1);
\draw[gray] (-1.5, 2) -- (2.5, 2);
\draw[gray] (-1.5, 3) -- (2.5, 3);

\draw (-0.5, -0.5) node [fill=white] {$-1$};
\draw (0.5, -0.5) node [fill=white] {$0$};
\draw (1.5, -0.5) node [fill=white] {$1$};

\draw (0.5, 0.5) node [fill=white] {$\rmE^{0,0}_1$};
\draw (-0.5, 1.5) node [fill=white] {$\rmE^{1,0}_1$};
\draw (1.5, 0.5) node [fill=white] {$\rmE^{0,1}_1$};
\draw (0.5, 1.5) node [fill=white] {$\rmE^{1,1}_1$};
\draw (-0.5, 2.5) node [fill=white] {$\rmE^{2,1}_1$};

\draw [->] (0.25, 0.75) -- (-0.25, 1.25);
\draw [->] (1.25, 0.75) -- (0.75, 1.25);
\draw [->] (0.25, 1.75) -- (-0.25, 2.25);

\end{tikzpicture}
\end{center}  

However, we note in advance that the terms will not necessarily carry the structure of abelian groups, as one might be used to in spectral sequences. We will now describe these terms and the differentials between them, so as to obtain a description of the~$\rmE_2$ page.


\subsection{The terms with~\texorpdfstring{$t=0$}{t=0}}

Let us first look at the two terms with~$t=0$. 

\begin{definition}
We define
\begin{equation}
\rmE_1^{0,0}=\prod_{x\in\bbB}\Iso\bbB(x,x),
\end{equation}
which is a monoid, and
\begin{equation}\label{eq:new_monoid}
\rmE_1^{1,0}=\prod_{y,z\in\bbB}\Iso\bfFun(\bbB(y,z),\bbB(y,z)),
\end{equation}
which is a monoid as well. 
\end{definition}

\begin{remark}\label{rem:E2confusion}
Here is a common source of confusion: Each functor~\hbox{$F\colon\bbB(y,z)\to\bbB(y,z)$} induces a map~$\Iso(\bbB(y,z))\to\Iso(\bbB(y,z))$, and this map only depends on the isomorphism class of the functor~$F$.
This gives us a (tautological) homomorphism
\[
\tau(y,z)\colon\Iso\bfFun(\bbB(y,z),\bbB(y,z))\longrightarrow\Map(\Iso\bbB(y,z),\Iso\bbB(y,z))
\]
of monoids, but this homomorphism is neither injective nor surjective in general. It is the source that it relevant in~\eqref{eq:new_monoid}, not the less useful target.
\end{remark}

There are two natural homomorphisms~{$\rmd_1',\rmd_1'':\rmE_1^{0,0}\to\rmE_1^{1,0}$} of monoids, given by sending a family~$([\,P(x)\,]\,|\,x\in\bbB)$ of isomorphism classes of objects to either the equivalence class~\hbox{$[\,M\mapsto P(z)\otimes M\,]$} 
of the endo-functor~\hbox{$M\mapsto P(z)\otimes M$} or to the equivalence class of the endo-functor functor~\hbox{$[\,M\mapsto M\otimes P(y)\,]$} respectively. 
The differential~\hbox{$\rmd_1\colon\rmE_1^{0,0}\to\rmE_1^{1,0}$} should be though of as the difference of them:

\begin{definition}\label{def:E200}
The monoid~$\rmE_2^{0,0}$ is defined as the equalizer of~$\rmd_1'$ and~$\rmd_1''$.
\end{definition}

\begin{proposition}\label{prop:E200<=ZHo}
There are injections
\[
\rmE_2^{0,0}\leqslant\rmZ(\bfHo(\bbB))\leqslant\rmE_1^{0,0}
\]
of monoids.
\end{proposition}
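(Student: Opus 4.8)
The plan is to realize both sub-objects as submonoids of~$\rmE_1^{0,0}$ cut out by equalizer conditions, and then to compare those conditions directly. The right-hand inclusion is immediate once the center is unwound. Recall from the introduction that an element of the Bernstein center~$\rmZ(\bfHo(\bbB))$ is a natural transformation~$\Id\to\Id$, hence a family~$(\phi(x)\,|\,x\in\bbB)$ of endomorphisms~$\phi(x)\in\rmMor_{\bfHo(\bbB)}(x,x)$, and such a transformation is determined by its components. Since~$\rmMor_{\bfHo(\bbB)}(x,x)=\Iso\bbB(x,x)$ by the definition of the classifying category, the assignment~$\phi\mapsto(\phi(x))$ embeds~$\rmZ(\bfHo(\bbB))$ as a submonoid of~$\prod_{x}\Iso\bbB(x,x)=\rmE_1^{0,0}$, which gives~$\rmZ(\bfHo(\bbB))\leqslant\rmE_1^{0,0}$.

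To prepare the left-hand inclusion I would next make the centrality condition explicit. Writing~$\phi(x)=[P(x)]$ and recalling that composition in~$\bfHo(\bbB)$ is induced by the horizontal composition~$\otimes$, naturality of~$\phi$ against a morphism~$[M]\colon x\to y$ reads~$[P(y)\otimes M]=[M\otimes P(x)]$ in~$\Iso\bbB(x,y)$, required for every~$M\in\bbB(x,y)$. After relabeling, this says precisely that~$\rmZ(\bfHo(\bbB))$ is the equalizer inside~$\rmE_1^{0,0}$ of the two composites~$\tau\circ\rmd_1'$ and~$\tau\circ\rmd_1''$, where~$\tau=\prod_{y,z}\tau(y,z)$ is the tautological homomorphism of Remark~\ref{rem:E2confusion} sending the class of an endofunctor to the induced self-map of~$\Iso\bbB(y,z)$. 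Indeed~$\tau\circ\rmd_1'$ carries~$([P(x)])$ to the family of self-maps~$[M]\mapsto[P(z)\otimes M]$ and~$\tau\circ\rmd_1''$ to~$[M]\mapsto[M\otimes P(y)]$, so their equalizer is exactly the centrality condition just derived.

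The left-hand inclusion is then formal. By Definition~\ref{def:E200} the monoid~$\rmE_2^{0,0}$ is the equalizer of~$\rmd_1'$ and~$\rmd_1''$ into~$\rmE_1^{1,0}$. Since~$\tau$ is a homomorphism of monoids, any family~$([P(x)])$ with~$\rmd_1'([P])=\rmd_1''([P])$ also satisfies~$\tau\rmd_1'([P])=\tau\rmd_1''([P])$; hence the equalizer of~$\rmd_1',\rmd_1''$ is contained in the equalizer of~$\tau\rmd_1',\tau\rmd_1''$, that is,~$\rmE_2^{0,0}\leqslant\rmZ(\bfHo(\bbB))$. As both maps are inclusions of submonoids of~$\rmE_1^{0,0}$, they are automatically homomorphisms of monoids.

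The conceptual content, and the step worth stating with care, is the comparison of the two equalizer conditions rather than any computation. Membership in~$\rmE_2^{0,0}$ demands that the endofunctors~$M\mapsto P(z)\otimes M$ and~$M\mapsto M\otimes P(y)$ of~$\bbB(y,z)$ be \emph{naturally isomorphic}, whereas membership in~$\rmZ(\bfHo(\bbB))$ only demands that they \emph{agree on isomorphism classes}, i.e.\ that~$P(z)\otimes M\cong M\otimes P(y)$ for each individual~$M$ with no coherence in~$M$. The inclusion is precisely the passage from the stronger requirement to the weaker one, effected by the map~$\tau$, which is neither injective nor surjective in general; this is exactly why the inclusion can be proper and is the germ of the fringe and obstruction phenomena discussed above. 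The only genuine bookkeeping is tracking the composition order in~$\bfHo(\bbB)$ when matching the naturality square to the two composites~$\tau\circ\rmd_1'$ and~$\tau\circ\rmd_1''$.
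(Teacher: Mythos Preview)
Your proof is correct and follows essentially the same approach as the paper: both identify~$\rmZ(\bfHo(\bbB))$ as the equalizer of~$\tau\rmd_1'$ and~$\tau\rmd_1''$ inside~$\rmE_1^{0,0}$, and both obtain the inclusion~$\rmE_2^{0,0}\leqslant\rmZ(\bfHo(\bbB))$ by comparing the stronger condition of a \emph{natural} isomorphism between the endofunctors~$M\mapsto P(z)\otimes M$ and~$M\mapsto M\otimes P(y)$ with the weaker condition of object-by-object isomorphisms. Your write-up is somewhat more explicit about the formal equalizer comparison (postcomposing with~$\tau$) and about the inclusion~$\rmZ(\bfHo(\bbB))\leqslant\rmE_1^{0,0}$, but the substance is identical.
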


\begin{proof}
On the one hand, an element in~$\rmE_2^{0,0}$ is an element~$\rmE_1^{0,0}$ that lies in the equalizer of~$\rmd_1'$ and~$\rmd_1''$. 
These are the families~$([\,P(x)\,]\,|\,x\in\bbB)$ of isomorphism classes of objects such that the two endo-functors~$M\mapsto P(z)\otimes M$ and~\hbox{$M\mapsto P(z)\otimes M$}
are {\em naturally} isomorphic. 
On the other hand, an element in the center~$\rmZ(\bfHo(\bbB))$ of the classifying category is just a family~$([\,P(x)\,]\,|\,x\in\bbB)$ such that the objects~$P(z)\otimes M$ 
and~\hbox{$P(z)\otimes M$} are isomorphic~(perhaps not naturally) for all~$M$. 

Finally, notice that the center~$\rmZ(\bfHo(\bbB))$ of the classifying category can be considered as the equalizer of the maps~$\prod_{y,z}\tau(y,z)\rmd_1'$ and~$\prod_{y,z}\tau(y,z)\rmd_1''$,
where the maps~$\tau(y,z)$ are the tautological maps defined in Remark~\ref{rem:E2confusion}.
\end{proof}


\subsection{The terms with~\texorpdfstring{~$t=1$}{t=1}}

Let us now look at the three terms with~$t=1$. 

\begin{definition}
We define  
\begin{equation}
\rmE_1^{0,1}=\prod_{x\in\bbB}\Aut_{\bbB(x,x)}(\Id(x))
\end{equation}
and 
\begin{equation}
\rmE_1^{1,1}=\prod_{y,z\in\bbB}\Aut_{\bfEnd(\bbB(y,z))}(\id),
\end{equation}
which are both abelian groups.
\end{definition}

Again, there are two distinguished homomorphisms~$\rmd_1',\rmd_1''\colon\rmE_1^{0,1}\to \rmE_1^{1,1}$, this time of abelian groups. One can be described as sending a family
\[
u=(u(x)\colon\Id(x)\to\Id(x)\,|\,x\in\bbB)
\]
of automorphisms to the natural transformation~$(u(z)\otimes\id(M)\,|\,M)$ and the other sends it to~$(\id(M)\otimes u(y)\,|\,M)$. 
Actually the targets are slightly different, but the tensor structure can be used to compare the two, using the diagram
\[
\xymatrix{
\Id(z)\otimes M\ar[d]_{u(z)\otimes\id(M)}\ar@{<->}[r]^-{\cong} & M\ar[d]\ar@{<->}[r]^-{\cong} & M\otimes\Id(y)\ar[d]^{\id(M)\otimes u(y)}\\
\Id(z)\otimes M\ar@{<->}[r]_-{\cong} & M\ar@{<->}[r]_-{\cong} & M\otimes\Id(y).
}
\]
The differential~$\rmd_1\colon\rmE_1^{0,1}\to\rmE_1^{1,1}$ is the difference of~$\rmd_1'$ and~$\rmd_1''$.

\begin{definition}
We define~$\rmE_2^{0,1}$ to be the equalizer of the two homomorphisms~$\rmd_1'$ and~$\rmd_1''$, this is the kernel of the difference~$\rmd_1=\rmd_1'-\rmd_1''$.
\end{definition}

Direct inspection gives the following result.

\begin{proposition}\label{prop:E201=Aut}
There is an isomorphism
\[
\rmE_2^{0,1}\cong\Aut_{\bfZ(\bbB)}(E,e)
\]
of abelian groups.
\end{proposition}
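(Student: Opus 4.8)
The plan is to unwind both sides of the claimed isomorphism into concrete data and exhibit a bijection that is manifestly a group homomorphism. First I would describe an automorphism of $(E,e)$ in $\bfZ(\bbB)$ explicitly. By Definition~\ref{def:Drinfeld_center}, since $E(x)=\Id(x)$, such an automorphism is a family $f=(f(x)\colon\Id(x)\to\Id(x)\,|\,x\in\bbB)$ of automorphisms of the identity objects, subject to the compatibility square with $e$. Because $e(M)$ is built from the identity constraints $\lambda,\rho$, the square in Definition~\ref{def:Drinfeld_center} unwinds (modulo the constraints, exactly as displayed in the diagram preceding this proposition) to the single condition that $f(z)\otimes\id(M)$ and $\id(M)\otimes f(y)$ agree as automorphisms of $M$, for every $M\in\bfMor_\bbB(y,z)$. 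Thus the data of such an $f$ is precisely a family $u=(u(x))$ in $\prod_{x\in\bbB}\Aut_{\bbB(x,x)}(\Id(x))=\rmE_1^{0,1}$ satisfying that constraint.

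Next I would match this constraint with the equalizer condition defining $\rmE_2^{0,1}$. The homomorphism $\rmd_1'$ sends $u$ to the natural transformation $(u(z)\otimes\id(M)\,|\,M)$ of the identity endofunctor of $\bbB(y,z)$, and $\rmd_1''$ sends it to $(\id(M)\otimes u(y)\,|\,M)$, both viewed inside $\rmE_1^{1,1}=\prod_{y,z}\Aut_{\bfEnd(\bbB(y,z))}(\id)$ after the constraint-mediated identification shown in the displayed diagram. Saying $\rmd_1'(u)=\rmd_1''(u)$ — i.e.\ $u$ lies in the equalizer, equivalently $\rmd_1 u=\rmd_1' u-\rmd_1'' u=0$ — is then literally the assertion that $u(z)\otimes\id(M)=\id(M)\otimes u(y)$ for all $M$. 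This is the same condition extracted in the previous paragraph, so the assignment $f\mapsto u$ is a bijection between $\Aut_{\bfZ(\bbB)}(E,e)$ and the underlying set of $\rmE_2^{0,1}$.

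It then remains to check that this bijection is a homomorphism of abelian groups. Composition in $\bfZ(\bbB)$ is computed componentwise in each $\bfMor_\bbB(x,x)$ via the forgetful faithful functor of Definition~\ref{def:Drinfeld_center}, so the composite of two automorphisms corresponds to the componentwise composite $(f\circ g)(x)=f(x)\circ g(x)$; this matches the group operation on $\rmE_1^{0,1}$, which restricts to $\rmE_2^{0,1}$. Abelianness on both sides is already guaranteed — on the left by the corollary that $\Aut_{\bfZ(\bbB)}(E,e)$ is abelian, and on the right because $\rmE_1^{0,1}$ is a product of automorphism groups of unit objects, hence abelian.

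The routine part is the diagram-chase unwinding the morphism condition in $\bfZ(\bbB)$; the one genuine subtlety I expect to be the main obstacle is the constraint bookkeeping. One must verify carefully that the identity-constraint isomorphisms $\lambda,\rho$ used to define $e(M)$ cancel correctly so that the compatibility square collapses to the clean equality $u(z)\otimes\id(M)=\id(M)\otimes u(y)$, and that this cancellation is compatible with the constraint-mediated comparison (the displayed square with the double-headed $\cong$ arrows) used to define $\rmd_1'$ and $\rmd_1''$ on possibly different targets. Once these two constraint identifications are seen to be the same, the isomorphism is immediate; this is presumably why the authors describe it as obtained by \emph{direct inspection}.
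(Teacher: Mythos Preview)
Your proposal is correct and is exactly the elaboration of what the paper means by ``direct inspection'': unwinding the definition of a morphism $(E,e)\to(E,e)$ in $\bfZ(\bbB)$ gives precisely the equalizer condition defining $\rmE_2^{0,1}\subseteq\rmE_1^{0,1}$, and the group structures agree because both are computed componentwise. The constraint bookkeeping you flag is the only point requiring care, and you have identified it correctly; once one observes that the same constraint diagram is used both to define $e(M)$ and to compare $\rmd_1'$ with $\rmd_1''$, the two conditions coincide on the nose.
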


This already finishes the calculation of one of the basic invariants of~$\bfZ(\bbB)$ as a braided tensor category, the (abelian) automorphism group of its tensor unit. 

\subsection{Measuring the failure of injectivity}

We will now proceed to calculate the (abelian) monoid~$\Iso(\bfZ(\bbB))$ of isomorphism classes of objects as well. In order to do so, we need to describe the remaining group on the~$\rmE_1$ page. 

Given three objects~$x,y,z$ of~$\bbB$, it will be useful to write
\[
\bfF(x,y,z)=\bfFun(\bbB(y,z)\times\bbB(x,y),\bbB(x,z))
\]
as an abbreviation for the functor category.

\begin{definition}\label{def:E121}
For any family~$P=(P(x)\,|\,x\in\bbB)$ we define
\begin{equation}
\rmE_1^{2,1}(P)=\prod_{x,y,z\in\bbB}\Aut_{\bfF(x,y,z)}(P\otimes?\otimes??),
\end{equation}
where~$P\otimes?\otimes??:\bbB(y,z)\times\bbB(x,y)\to\bbB(x,z)$ denotes the functor
\[
(M, N)\longmapsto P(z)\otimes M\otimes N.
\]
This is a group that may not be abelian. 
\end{definition}

It is clear that an isomorphism~\hbox{$P\cong Q$} determines an isomorphism~$\rmE_1^{2,1}(P)\cong\rmE_1^{2,1}(Q)$ of groups. For~$P=E$, the group
\[
\rmE_1^{2,1}(E)\cong \prod_{x,y,z\in\bbB}\Aut_{\bfF(x,y,z)}(\ot)
\] 
receives three homomorphisms from the abelian group~$\rmE_1^{1,1}$: One sends a family 
\[
f=(f(M)\colon M\longrightarrow M\,|\,y,z\in\bbB,M\in\bbB(y,z))\in\rmE_1^{1,1}
\]
of natural automorphisms of the identity to the family
\[
(f(M)\otimes \id(N)\,|\,x,y,z\in\bbB,M\in\bbB(y,z),N\in\bbB(x,y)),
\]
and the other ones are given similarly by
\[
(\id(M)\otimes f(N)\,|\,x,y,z\in\bbB,M\in\bbB(y,z),N\in\bbB(x,y))
\] 
and
\[
(f(M\otimes N)\,|\,x,y,z\in\bbB,M\in\bbB(y,z),N\in\bbB(x,y)),
\]
respectively.

Lead by Equation~\eqref{eq:triangle}, we define a subgroup of the abelian group~$\rmE_1^{1,1}$ by
what should be thought of as the alternating sum of these three homomorphisms:

\begin{definition}
\begin{equation}
\rmZ_1^{1,1}=\{f\in\rmE_1^{1,1}\,|\,f(M\otimes N)=(\id(M)\otimes f(N))(f(M)\otimes\id(N))\}
\end{equation}
\end{definition}

Note that
\[
(\id(M)\otimes f(N))(f(M)\otimes\id(N))=f(M)\otimes f(N),
\] 
so that we can rewrite this definition as
\begin{equation}\label{eq:rewrite}
\rmZ_1^{1,1}=\{f\in\rmE_1^{1,1}\,|\,f(M\otimes N)=f(M)\otimes f(N)\}.
\end{equation}

\begin{definition}
We define
\begin{equation}
\rmB_1^{1,1}=\{(u\otimes\id(N))(\id(M)\otimes u^{-1})\,|\,u\in \rmE_1^{0,1}\}
\end{equation}
to be the image of the differential~$\rmd_1\colon\rmE_1^{0,1}\to\rmE_1^{1,1}$. 
\end{definition}

Recall that both of the groups~$\rmE_1^{0,1}$ and~$\rmE_1^{1,1}$ are abelian, so that taking the difference makes sense, and the image is a subgroup. It is then clear that~$\rmB_1^{1,1}\leqslant\rmZ_1^{1,1}$

\begin{definition}
We define
\begin{equation}
\rmE_2^{1,1}=\rmZ_1^{1,1}/\rmB_1^{1,1},
\end{equation}
which is also an abelian group.
\end{definition}

\begin{proposition}\label{prop:E211=kernel}
There is an isomorphism
\[
\rmE_2^{1,1}\cong\Ker(\Iso(\bfZ(\bbB))\to\rmZ(\bfHo(\bbB)))
\]
of abelian groups.
\end{proposition}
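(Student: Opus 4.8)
The plan is to construct the claimed isomorphism in three reduction steps and then check compatibility with the group structures. First I observe that an isomorphism class $[(P,p)]$ lies in the kernel of the characteristic homomorphism~\eqref{eq:characteristic} exactly when $[P(x)]=[\Id(x)]$ in $\Iso\bbB(x,x)$ for every $x$, that is, when each $P(x)$ is isomorphic to the identity $\Id(x)$. The opening move is then a normalization: choosing isomorphisms $\phi(x)\colon P(x)\to\Id(x)$ and conjugating the structure isomorphisms $p(M)$ by them, I produce an object $(E,p')$ with underlying family $E=(\Id(x))$ together with an isomorphism $(P,p)\cong(E,p')$ in $\bfZ(\bbB)$. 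One checks that the conjugated $p'$ still satisfies the two conditions of Definition~\ref{def:Drinfeld_center}, so that the kernel is precisely the set of isomorphism classes of objects of the special form $(E,p)$.

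The second step parametrizes these objects. For an object $(E,p)$ each structure map is an isomorphism $p(M)\colon\Id(z)\otimes M\to M\otimes\Id(y)$, and composing with the unit constraints turns it into a natural automorphism $f(M)=\rho_M\,p(M)\,\lambda_M^{-1}$ of the identity functor on $\bbB(y,z)$; these assemble into an element $f\in\rmE_1^{1,1}$. The normalization condition that $p(\Id(x))$ be the composite of the identity constraints says exactly that $f$ is a genuine natural transformation of the identity, while the coherence condition~\eqref{eq:triangle}, after the constraints cancel, becomes precisely the multiplicativity $f(M\otimes N)=f(M)\otimes f(N)$ of~\eqref{eq:rewrite}. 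Thus $(E,p)\mapsto f$ is a bijection from objects $(E,p)$ onto $\rmZ_1^{1,1}$, with the tensor unit $(E,e)$ going to $f=\id$ and the tensor product of objects going to the product in the abelian group $\rmE_1^{1,1}$.

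The third step records morphisms. An isomorphism $(E,p)\to(E,p')$ in $\bfZ(\bbB)$ is a family $u=(u(x)\colon\Id(x)\to\Id(x))$ of automorphisms, that is, an element of $\rmE_1^{0,1}$, satisfying the commuting square of Definition~\ref{def:Drinfeld_center}. Writing this square out with $P=Q=E$ and translating it through the unit constraints as above shows that $f$ and $f'$ differ exactly by the boundary $(u\otimes\id(N))(\id(M)\otimes u^{-1})$, i.e.\ by an element of $\rmB_1^{1,1}$. Hence isomorphism classes of objects $(E,p)$ correspond bijectively to $\rmZ_1^{1,1}/\rmB_1^{1,1}=\rmE_2^{1,1}$. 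Combining the three steps yields a bijection from the kernel onto $\rmE_2^{1,1}$; since the tensor product on the source matches the group operation on the target and the class $[(E,e)]$ matches the neutral element, this bijection is an isomorphism of abelian groups, which in particular shows that the kernel really is a group.

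The main obstacle will be the bookkeeping in the passage between $p$ and $f$: one must verify that the unit constraints $\lambda,\rho$ (and the suppressed associativity constraints $\alpha$) cancel so cleanly that~\eqref{eq:triangle} collapses to the multiplicativity condition~\eqref{eq:rewrite}, and that the same constraints turn the boundary term in the morphism square into exactly the generator of $\rmB_1^{1,1}$. Working in the equivalent strict $2$-category, where these constraints are identities, reduces everything to routine naturality diagram chases; the only real pitfall is keeping the opposite variances of $\lambda$ and $\rho$ and the left/right placement of the $P(x)$ straight, which is where orientation errors would otherwise arise.
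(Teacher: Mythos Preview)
Your proof is correct and follows essentially the same approach as the paper's: both identify the kernel with isomorphism classes of objects of the form $(E,p)$, set up a bijection between such objects and $\rmZ_1^{1,1}$ via the unit constraints, and show that isomorphisms between them are governed by $\rmB_1^{1,1}$. The only cosmetic differences are that you run the construction from the kernel toward $\rmE_2^{1,1}$ (the paper defines the map $\rmZ_1^{1,1}\to\Iso(\bfZ(\bbB))$ in the other direction) and that you spell out the normalization step and the role of $\lambda,\rho$ more explicitly than the paper does.
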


\begin{remark}
Notice that~$\Iso(\bfZ(\bbB))$ and~$\rmZ(\bfHo(\bbB))$ are abelian monoids, and are not necessarily groups.
However, each element from~$\Iso(\bfZ(\bbB))$ which maps into the identity element in~$\rmZ(\bfHo(\bbB))$ is invertible.
\end{remark}

\begin{proof}
The kernel above contains all isomorphism classes of objects~$(\,P(x)\,|\,x\in \bbB\,)$ in the Drinfeld center~$\bfZ(\bbB)$ such that there exists an isomorphism~$P(x)\cong\Id(x)$ for every object~$x$ of~$\bbB$.
We define
\begin{align*}
\rmZ_1^{1,1}&\longrightarrow \Iso(\bfZ(\bbB))\\
f&\longmapsto(P_f,p_f)
\end{align*}
as follows:
Every~$f\in\rmZ_1^{1,1}$ is a family of natural isomorphisms from the identity of~${\bbB}(y,z)$ to itself that is compatible with the tensor product as in~\eqref{eq:rewrite}.
We define the image~$(P_f,p_f)$ of~$f$ to be the following central object: For every object~$x\in\bbB$, we choose~$P_f(x)=\Id(x)$ to be the identity, and for every~\hbox{$M\in\bbB(x,y)$}, the isomorphism~\hbox{$p_f\colon M\cong M\ot P_f(x)\rightarrow P_f(y)\ot M\cong M$} is given by~$f(M)\colon M\rightarrow M$. The fact that the family~$f$ is compatible with the tensor products ensures that this construction produces a central object. 

The image~$(P_f,p_f)$ lies in the kernel under consideration, and in fact, every element in the kernel is of this form. If the object~$(P_f,p_f)$ is isomorphic to the identity object, then there is a family of isomorphisms~\hbox{$u(x)\colon P_f(x)=\Id(x)\rightarrow\Id(x)$} 
such that~$p_f$ is given by~\hbox{$(u(y)\ot\id(M))(\id(M)\ot u(x)^{-1})$}, where we have used the identifications~\hbox{$\Id(y)\ot M\cong M\cong M\ot\Id(x)$} again. 
This just means that~$f$ lies in~$\rmB_1^{1,1}$.
\end{proof}

The preceding proposition explains the potential failure of the injectivity of the characteristic homomorphism~\eqref{eq:characteristic}: 
If an element~$\rmZ(\bfHo(\bbB))$ can be lifted to an element in~$\bfZ(\bbB)$, then the abelian group~$\rmE_2^{1,1}$ acts on the different representatives in~$\Iso(\bfZ(\bbB))$. 
However, in monoids, this action need neither be free nor transitive.


\subsection{The~\texorpdfstring{$\rmE_2$}{E2} page}

The part of the~$\rmE_2$ page of the spectral sequence that can be non-trivial looks as follows.
\begin{center}
\begin{tikzpicture}[xscale=2]

\draw[gray] (-1, -0.5) -- (-1, 3.5);
\draw[->] (0, -0.5) -- (0, 3.5) node[above] {$s$};
\draw[gray] (1, -0.5) -- (1, 3.5);
\draw[gray] (2, -0.5) -- (2, 3.5);

\draw (-1.5, 0.5) node [fill=white] {$0$};
\draw (-1.5, 1.5) node [fill=white] {$1$};
\draw (-1.5, 2.5) node [fill=white] {$2$};

\draw[->] (-1.5, 0) -- (2.5, 0) node[right] {$t-s$};
\draw[gray] (-1.5, 1) -- (2.5, 1);
\draw[gray] (-1.5, 2) -- (2.5, 2);
\draw[gray] (-1.5, 3) -- (2.5, 3);

\draw (-0.5, -0.5) node [fill=white] {$-1$};
\draw (0.5, -0.5) node [fill=white] {$0$};
\draw (1.5, -0.5) node [fill=white] {$1$};

\draw (0.5, 0.5) node [fill=white] {$\rmE^{0,0}_2$};
\draw (1.5, 0.5) node [fill=white] {$\rmE^{0,1}_2$};
\draw (0.5, 1.5) node [fill=white] {$\rmE^{1,1}_2$};
\draw (-0.5, 2.5) node [fill=white] {$\rmE^{2,1}_2(?)$};

\draw [->] (0.25, 0.75) -- (-0.2, 2.2);

\end{tikzpicture}
\end{center}
The three terms that have already been calculated are
\begin{align*}  
\rmE_2^{0,1}&\cong\Aut_{\bfZ(\bbB)}(E,e)
\end{align*}  
in the column~$t-s=1$ and
\begin{align*}  
\rmE_2^{0,0}&\leqslant\rmZ(\bfHo(\bbB))\\
\rmE_2^{1,1}&\cong\Ker(\Iso(\bfZ(\bbB))\to\rmZ(\bfHo(\bbB)))
\end{align*}
in the column~$t-s=0$, so that there is an exact sequence 
\[
0\longrightarrow
\rmE_2^{1,1}\longrightarrow
\Iso(\bfZ(\bbB))\longrightarrow
\rmZ(\bfHo(\bbB))
\]
that describes~$\Iso(\bfZ(\bbB))$, except for the image of the characteristic homomorphism. 

One may guess that the image of the characteristic homomorphism would be the kernel of a differential
\begin{equation}\label{eq:d2}
\rmd_2\colon\rmE_2^{0,0}\longrightarrow\rmE_2^{2,1},
\end{equation}
but the situation is more complicated: The question mark in~$\rmE^{2,1}_2(?)$ in the figure indicates that we do not have a single group~$\rmE_2^{2,1}$ as the target of a differential~\eqref{eq:d2}, but rather an entire family~$\rmE_2^{2,1}(P)$ of groups, one as the target of a tailored differential that acts on~$[\,P\,]\in\rmE_2^{0,0}$. This will now be explained in detail.


\subsection{Obstructions to surjectivity}\label{sec:obstructions}

We now address the following question: When can an element in the center~$\rmZ(\bfHo(\bbB))$ of the classifying category be lifted to an element in~$\Iso(\bfZ(\bbB))$ and hence in~$\bfZ(\bbB)$? 
Our proof of Proposition~\ref{prop:E200<=ZHo} already gives one condition: 
That element should lie in the submonoid~$\rmE_2^{0,0}$ that is cut out as the `kernel of~$\rmd_1$.' 
We may therefore right away start with an element in the monoid~$\rmE_2^{0,0}$.

Recall from our Definition~\ref{def:E200} that an element in the monoid~$\rmE_2^{0,0}$ is given by a family~$(\,P(x)\,|\,x\in\bbB\,)$ of objects for which the two functors~\hbox{$\rmR_{P(x)},\rmL_{P(y)}\colon\bbB(x,y)\to \bbB(x,y)$} given by~$M\mapsto M\ot P(x)$ and~$M\mapsto P(y)\ot M$, respectively, are naturally isomorphic. The family can be lifted to an object in the Drinfeld center~$\bfZ(\bbB)$ if and only if we can choose a family  
$p=(\,p_{x,y}\colon\rmR_{P(x)}\to\rmL_{P(y)}\,|\,x,y\in\bbB\,)$ of (natural) isomorphisms of functors such that we have a natural isomorphism 
\[
p_{x,z}(M\ot N) = (\id(M)\ot p_{x,y}(N))(p_{y,z}(M)\ot\id(N))
\]
for every pair of objects~$M\in \bbB(y,z)$ and~\hbox{$N\in\bbB(x,y)$}. 
To measure the failure of a given~$p$ to comply to these needs, we may consider the composition
\begin{equation}\label{eq:aut}
\rmd_2(p)_{x,y,z}(M,N)=(\id(M)\ot p_{x,y}(N))(p_{y,z}(M)\ot\id(N))p_{x,z}^{-1}(M\ot N),
\end{equation}
which is an automorphism of the functor~$(M,N)\mapsto P(z)\ot M\ot N$. If we let~$x$,~$y$, and~$z$ vary, then the family of the~$\rmd_2(p)_{x,y,z}$ defines an element~$\rmd_2(p)$ of the group~$\rmE_1^{2,1}(P)$ of our Definition~\ref{def:E121}. The automorphisms~\eqref{eq:aut} is the identity automorphism if and only if~$(P,p)$ lies in the Drinfeld center~$\bfZ(\bbB)$. This leads us to regard the automorphisms~\eqref{eq:aut} as the obstructions for~$(P,p)$ to be an object of the Drinfeld center. 

Of course, it is possible that~$(P,p)$ will not be an object of the Drinfeld center, but~$(P,p')$ will be, for some other family~$p'=(\,p'_{x,y}\,)$ of isomorphisms. Since the group of automorphisms of the functor~$(M,N)\mapsto P(z)\ot M\ot N$ is not abelian in general, we proceed as follows.

\begin{definition}
We define the {\em obstruction differential}$~\rmd_2$ on~$P$ by
\[
\rmd_2(P)=\{\,\rmd_2(p)\,|\,p=(\,p_{x,y}\colon\rmR_{P(x)}\to\rmL_{P(y)}\,|\,x,y\in\bbB\,)\,\}\subseteq\rmE_1^{2,1}(P),
\] 
where~$p$ runs over all possible isomorphisms of functors.
We will say that~$\rmd_2(P)$ {\em vanishes} on~$P$ if it contains the identity automorphism.
\end{definition}

The discussion preceding the definition proves the following result.

\begin{proposition}
The obstruction differential~$\rmd_2$ vanishes on~$P$ if for some choice of~$p$ the object~$(P,p)$ lies in the Drinfeld center~$\bfZ(\bbB)$.
\end{proposition}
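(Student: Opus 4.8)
The plan is to read the statement straight off the definitions, since the essential observation has already been made in the discussion preceding the definition of the obstruction differential. First I would unwind the conclusion: by definition $\rmd_2$ \emph{vanishes} on $P$ exactly when the identity automorphism of the functor $(M,N)\mapsto P(z)\ot M\ot N$ lies in $\rmd_2(P)\subseteq\rmE_1^{2,1}(P)$, and since $\rmd_2(P)=\{\,\rmd_2(p)\,\}$ is indexed by the families $p=(p_{x,y}\colon\rmR_{P(x)}\to\rmL_{P(y)})$, this is the same as exhibiting one such family $p$ with $\rmd_2(p)=\id$. So the task is to produce, from the hypothesis, a single $p$ with $\rmd_2(p)=\id$.

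The hypothesis hands us precisely such a $p$. By assumption there is a family $p$ making $(P,p)$ an object of $\bfZ(\bbB)$, which by Definition~\ref{def:Drinfeld_center} means exactly that the coherence equation~\eqref{eq:triangle} holds for all horizontally composable $M$ and $N$. But the automorphism~\eqref{eq:aut} was built to record the discrepancy between the two sides of~\eqref{eq:triangle}: its value $\rmd_2(p)_{x,y,z}(M,N)$ is one side composed with the inverse of the other. Hence~\eqref{eq:triangle} holding for this $p$ is equivalent to $\rmd_2(p)_{x,y,z}(M,N)=\id$ for all $x,y,z$ and all composable $M,N$; and because $\rmE_1^{2,1}(P)$ is a product over $x,y,z$ of automorphism groups of functors, all of these components being identities is precisely the assertion $\rmd_2(p)=\id$ in $\rmE_1^{2,1}(P)$.

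Combining the two steps finishes the proof: the $p$ provided by the hypothesis satisfies $\rmd_2(p)=\id$, so $\id\in\rmd_2(P)$, so $\rmd_2$ vanishes on $P$. The same chain of equivalences gives the converse as well, so the stated implication is really one half of an `if and only if'; only this half is needed.

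The one genuinely delicate point---and where I would spend my care---is the bookkeeping concealed by the phrase `ignoring the associativity constraints.' The $p$ of Definition~\ref{def:Drinfeld_center} and the $p_{x,y}\colon\rmR_{P(x)}\to\rmL_{P(y)}$ of~\eqref{eq:aut} are mutually inverse natural transformations, and the three arrows of the triangle in Remark~\ref{rem:higher} are composed through the canonical identifications among $M\ot P(y)\ot N$, $P(z)\ot M\ot N$, and $M\ot N\ot P(x)$ with the associators suppressed. Before writing anything I would fix once and for all the direction of each $p(M)$ and the placement of the associativity isomorphisms, so that both~\eqref{eq:triangle} and~\eqref{eq:aut} typecheck as composites that are endomorphisms of the functor $(M,N)\mapsto P(z)\ot M\ot N$. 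Once that is pinned down, the equivalence between~\eqref{eq:aut} being the identity and~\eqref{eq:triangle} holding is immediate, with nothing left but cancelling an isomorphism against its inverse.
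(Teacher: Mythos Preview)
Your proposal is correct and takes essentially the same approach as the paper, which in fact gives no separate proof at all: it simply writes ``The discussion preceding the definition proves the following result,'' and that discussion is exactly what you have unwound. Your extra care about the direction of $p$ versus $p_{x,y}$ (they are mutual inverses) and the suppressed associators is appropriate and goes slightly beyond what the paper makes explicit.
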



\section{Groups and the category of bands}\label{sec:groups}

In this section, we present a detailed discussion of the various notions of centers, and in particular the Drinfeld center, 
in a situation that is genuinely different from tensor categories: categories and bicategories where the objects are (discrete) groups. 

A size limitation needs to be chosen, and we can and will assume that all groups under consideration are finite. Therefore, let~$\bfG$ denote the category of all finite groups. Definition~\ref{def:Drinfeld_center} requires a small category, so that it will be clear that the result is a set. However, our calculations will reveal that the result is a set anyway. We could also choose to work with a skeleton, and then note that the choice of skeleton does not affect the calculation, since any two skeleta are equivalent.

The category~$\bfG$ is the underlying category of a~$2$-category~$\bbG$, where~$\bfMor_\bbG(G,H)$ is the groupoid of homomorphisms~$G\to H$, 
which are the~$1$-morphisms of~$\bbG$, and the~$2$-morphisms~\hbox{$h\colon\alpha\to\beta$} between two homomorphisms~$\alpha,\beta\colon G\to H$ are the elements~$h$ in~$H$ that conjugate one into the other:
\[
\{\,h\in H\,|\,h\alpha(g)h^{-1}=\beta(g)\text{ for all }g\in G\,\}.
\] 

\begin{remark}
The classifying category~$\bfB=\bfHo(\bbG)$ is sometimes called the category of {\em bands} in accordance with its use in non-abelian cohomology and the theory of gerbes, see~\cite[IV.1]{Giraud}. 
It is customary to denote the conjugacy classes of homomorphisms~$G\to H$ by
\[
\Rep(G,H)=\Iso\bfMor_\bbG(G,H)=\rmMor_\bfB(G,H).
\]
These are the sets of morphisms in the classifying category~$\bfB=\bfHo(\bbG)$. Note that Giraud used the notation~$\Hex(G,H)$ instead of our~$\Rep(G,H)$.
\end{remark} 

The automorphism group of any given homomorphism~$\alpha\colon G \to H$ in the category~\hbox{$\bbG(G,H)=\bfMor_\bbG(G,H)$} of homomorphisms is the centralizer
\begin{equation}\label{eq:Zalpha}
\Aut_{\bbG(G,H)}(\alpha)=\{\,h\in H\,|\,h\alpha(g)h^{-1}=\alpha(g)\text{ for all }g\in G\,\}=\rmC(\alpha) 
\end{equation}
of the image of~$\alpha$. This is a subgroup of the group~$H$. We remark that the centralizers need not be abelian. For example, the centralizer of the constant homomorphism~\hbox{$G\to G$} is the entire group~$G$, 
whereas the center of~$G$ reappears as the centralizer of the identity~$G\to G$. These observations will be useful when we will determine the Drinfeld center of~$\bbG$. 

\subsection{Some ordinary centers}

Before we turn our attention towards the Drinfeld center, let us first describe the centers of the ordinary categories~$\bfG$ and~$\bfB=\bfHo(\bbG)$. 

\begin{proposition}\label{prop:ordinary_groups}
The centers of the categories~$\bfG$ and~$\bfB=\bfHo(\bbG)$ are isomorphic to the abelian monoid~$\{0,1\}$ under multiplication.
\end{proposition}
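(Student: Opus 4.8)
The plan is to exhibit two evident central elements of each category and then show there are no others. For any category of finite groups with identity‑on‑objects morphisms, the identity natural transformation $\id$ (given by $\id_G$ at each $G$) and the \emph{trivial} transformation $c$ (given at each $G$ by the constant homomorphism $c_G\colon g\mapsto e$) are both central; one checks at once that $c_G$ is a homomorphism natural in $G$. Under composition they satisfy $\id\circ\id=\id$, $\id\circ c=c\circ\id=c$, and $c\circ c=c$, so $\{\id,c\}$ is a submonoid isomorphic to $\{0,1\}$ under multiplication, with $\id\leftrightarrow 1$ and $c\leftrightarrow 0$. For nontrivial $G$ these two differ, and they remain distinct in $\bfB$ since $[\id_G]\neq[c_G]$ there; so it remains only to prove that every central element is one of them. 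The first step for both $\bfG$ and $\bfB$ is to pin a central element down on cyclic groups: as $\mathrm{End}(\bbZ/n)=\bbZ/n$ is abelian, a central element is multiplication by some $k_n$ on $\bbZ/n$, naturality along the quotients $\bbZ/n\to\bbZ/m$ (for $m\mid n$) forces $k_n\equiv k_m\pmod m$, and naturality along $\bbZ/n\to G$, $1\mapsto g$, shows the component at $G$ sends each $g$ of order $n$ to $g^{k_n}$. For $\bfG$ this is an equality; for $\bfB$ it holds only up to conjugacy in $G$.

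For $\bfG$ this already finishes, via dihedral groups. In $D_n=\langle r,s\rangle$ the component is the honest endomorphism $r\mapsto r^{k_n}$, $s\mapsto s^{k_2}$; applying it to the order‑two element $sr$ and comparing $(sr)^{k_2}$ with $s^{k_2}r^{k_n}$ yields $r^{k_n}=e$ when $k_2\equiv0$ and $r^{k_n}=r$ when $k_2\equiv1$, that is, $k_n\equiv0$ (respectively $k_n\equiv1$) modulo $n$ for every $n$. Hence the central element is $c$ or $\id$, which proves the claim for $\bfG$.

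The category $\bfB$ is the main obstacle, because its morphisms are conjugacy classes and the dihedral computation dissolves: the image of $s$ is determined only up to conjugacy, so any rotation exponent survives and $k_n$ is not pinned. To recover rigidity I would use two further families of groups whose relevant data is conjugacy‑invariant. First, the alternating groups $A_N$ (simple for $N\geq 5$, generated by their elements of any fixed prime order $q$, and containing elements of every prime‑power order up to $N$) give a dichotomy: if $k_q\equiv0\pmod q$ for some prime $q$, then the component at $A_N$ kills all order‑$q$ elements, hence is the trivial homomorphism, and evaluating on elements of prime‑power order forces $k_{r^a}\equiv0$ for every prime power $r^a$; thus either $k=0$ (so the element is $c$) or $k_q\not\equiv0$ for every prime $q$, i.e.\ $k$ is a profinite unit. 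Second, in the unit case I would use metacyclic (Frobenius) groups: for each $n$, Dirichlet's theorem supplies a prime $p\equiv1\pmod n$ and hence a faithful action, giving $\bbZ/p\rtimes\bbZ/n$ in which a generator $b$ acts on $\bbZ/p=\langle a\rangle$ by $bab^{-1}=a^t$ with $t$ of order $n$ modulo $p$. Writing $\psi$ for a representative endomorphism of the component there, naturality pins $\psi(a)$ to a nontrivial power $a^\alpha$ (with $\alpha\neq0$, as $k_p$ is a unit) and confines $\psi(b)$ to the coset $\langle a\rangle b^{k_n}$, so its $b$‑exponent equals $k_n$; the relation $\psi(bab^{-1})=\psi(a)^t$ then reads $\alpha t^{k_n}\equiv\alpha t\pmod p$, and since $\alpha\neq0$ and $t$ has order $n$ this gives $k_n\equiv1\pmod n$ for every $n$. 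Hence $k=1$, the element is $\id$, and $\rmZ(\bfB)\cong\{0,1\}$.

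The hard part is precisely this $\bfB$ computation: the conjugacy indeterminacy must be absorbed by testing against groups whose structure is conjugacy‑stable — the order of an element, for the alternating‑group dichotomy, and the $b$‑exponent in a Frobenius extension, for forcing $k\equiv1$ — together with an appeal to Dirichlet to produce the required faithful cyclic actions. By contrast, the $\bfG$ case needs none of this, since there the components are honest endomorphisms and a single dihedral relation already determines $k_n$.
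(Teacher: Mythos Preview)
Your argument is correct and follows a genuinely different path from the paper's. For~$\bfG$, the paper rules out the power maps $g\mapsto g^n$ for $n\neq 0,1$ by testing on symmetric and alternating groups, whereas your dihedral computation is more elementary and just as decisive. For~$\bfB$, the paper proceeds by a case distinction on the exponent: for $|n|\geqslant 2$ it considers the subgroup of~$\rmS(m)$ generated by the elements of order~$n$ (normal, hence equal to~$\rmA(m)$ or~$\rmS(m)$, forcing a contradiction), and for $n=-1$ it invokes the existence of finite complete groups of odd order (groups with trivial center and trivial outer automorphism group). Your route replaces this with a clean dichotomy via alternating groups---either $k=0$ or $k$ is a profinite unit---and then eliminates the unit case using the Frobenius groups $\bbZ/p\rtimes\bbZ/n$ furnished by Dirichlet's theorem. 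Your method handles the profinite parameter uniformly and sidesteps the comparatively exotic complete groups of odd order; the trade-off is the arithmetic input from Dirichlet, which the paper's purely group-theoretic argument does not require.
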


\begin{proof}
This is straightforward for the category~$\bfG$ of groups and homomorphisms. An element in the center thereof is a family~$P=(P_G\colon G\to G)$ of homomorphisms that are natural in~$G$. 
We can evaluate~$P$ on the full subcategory of cyclic groups, and since~$\bfMor(\bbZ/k,\bbZ/k)\cong\bbZ/k$, 
we see that~$P$ is determined by a profinite integer~$n$ in~$\widehat\bbZ$: we must have~$P_G(g)=g^n$ for all groups~$G$ and all of their elements~$g$. 
But, if~$n$ is not~$0$ or~$1$, then there are clearly groups for which that map is not a homomorphism. In fact, we can take symmetric or alternating groups, as we will see in the course of the rest of the proof.

Let us move on to the center of the classifying category~$\bfB=\bfHo(\bbG)$. Again, the homomorphisms~$g\mapsto g^0$ and~$g\mapsto g^1$ are in the center, 
and they still represent different elements, since they are not conjugate. In the classifying category, if~$[\,P\,]=([\,P_G\,]\colon G\to G)$ is an element in the center, 
testing against the cyclic groups only shows that there is a profinite integer~$n$ such that~$P_G(g)$ is {\em conjugate} to~$g^n$ for each group~$G$ and each of its elements~$g$. 
We will argue that no such family of homomorphisms~$P_G$ exists unless~$n$ is~$0$ or~$1$.

Let us call an endomorphisms~$\alpha\colon G\to G$ on some group~$G$ {\em of conjugacy type~$n$} if~$\alpha(g)$ is conjugate to~$g^n$ for all~$g$ in~$G$. 
We need to show that for all~$n$ different from~$0$ and~$1$ there exists at least one group~$G$ that does not admit an endomorphism of conjugacy type~$n$.

If~$|n|\geqslant2$, then we choose~$m\geqslant\max\{n,5\}$ and consider the subgroup~$G$ of the symmetric group~$\rmS(m)$ generated by the elements of order~$n$. 
Since the set of generators is invariant under conjugation, this subgroup is normal, and it follows that~$G=\rmA(m)$~(the  subgroup of alternating permutations) or~$G=\rmS(m)$. 
An endomorphisms~$\alpha\colon G\to G$ of conjugacy type~$n$ would have to be trivial because it vanishes on the generators. But then~$g^n$ would be trivial for all elements~$g$ in~$\rmA(m)\leqslant G$, 
which is absurd.

If~$n=-1$, then we first note that an endomorphisms~$\alpha\colon G\to G$ of conjugacy type~$-1$ is automatically injective. Hence, if~$G$ is finite, then it is an automorphism. 
Therefore we choose a nontrivial finite group~$G$ of odd order such that its outermorphism group is trivial.
~(Such groups exist, see~\cite{Horosevskii},~\cite{Dark} or~\cite{Heineken} for examples that also have trivial centers.) 
If~$\alpha\colon G\to G$ were an endomorphisms of conjugacy type~$-1$, then this would be an inner automorphism by the assumption on~$G$. 
Then~$\id\colon G\to G$, which represents the same class, would also be an endomorphisms of conjugacy type~$-1$. 
In other words, every element~$g$ would be conjugate to its inverse~$g^{-1}$, a contradiction since the order of~$G$ is odd.
\end{proof}

\subsection{The Drinfeld center}

Now that we have evaluated the centers of the ordinary categories of groups and bands, 
we are ready to apply the obstruction theory and spectral sequence introduced in Section~\ref{sec:ss} 
in order to determine the Drinfeld center~$\bfZ(\bbG)$ of the~$2$-category~$\bbG$ of groups. The following result describes the two basic invariants of~$\bfZ(\bbG)$.

\begin{proposition}
The maps
\[
\rmZ(\bfG)\longrightarrow\Iso\bfZ(\bbG)\longrightarrow\rmZ(\bfB=\bfHo(\bbG))
\]
are both isomorphisms, and the automorphism group of the identity object in~$\bfZ(\bbG)$ is trivial.
\end{proposition}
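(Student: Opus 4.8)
The plan is to extract both invariants from the obstruction theory of Section~\ref{sec:ss}, feeding in the computation $\rmZ(\bfG)\cong\{0,1\}\cong\rmZ(\bfB)$ of Proposition~\ref{prop:ordinary_groups}. The automorphism group is governed by Proposition~\ref{prop:E201=Aut}, which identifies it with~$\rmE_2^{0,1}$. For the monoid I would show that the characteristic homomorphism $\psi\colon\Iso\bfZ(\bbG)\to\rmZ(\bfB)$ is a bijection; since $\psi$ sits in a factorization $\rmZ(\bfG)\xrightarrow{\phi}\Iso\bfZ(\bbG)\xrightarrow{\psi}\rmZ(\bfB)$ of the canonical comparison of Proposition~\ref{prop:ordinary_groups}, it then follows formally (from $\psi$ and $\psi\phi$ being bijective) that $\phi$ is an isomorphism as well.

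Computing $\rmE_2^{0,1}$ comes first. By~\eqref{eq:Zalpha} we have $\rmE_1^{0,1}=\prod_G\Aut_{\bbG(G,G)}(\Id(G))=\prod_G\rmZ(G)$, and $\rmE_2^{0,1}$ is the equalizer cut out by the relation $u(H)=\alpha(u(G))$ for all homomorphisms $\alpha\colon G\to H$. Evaluating on the trivial endomorphism $H\to H$ already forces $u(H)=e$ for every $H$, so $\rmE_2^{0,1}=0$ and $\Aut_{\bfZ(\bbG)}(E,e)$ is trivial.

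For surjectivity of $\psi$ I would observe that the two classes in $\rmZ(\bfB)$ are represented by the family of trivial endomorphisms and the family of identities; both are strictly natural in $\bfG$, hence lie in $\rmZ(\bfG)$, and each acquires the identity coherence data $p\equiv\id$, which satisfies~\eqref{eq:triangle} trivially. Their images under $\psi$ are $0$ and $1$, so $\psi$ is onto and $\psi\phi$ is the comparison isomorphism. For injectivity I would treat the two fibers of $\psi$ separately. Over the unit~$1$ the fiber is $\Ker\psi\cong\rmE_2^{1,1}$ by Proposition~\ref{prop:E211=kernel}; to see this vanishes, take $f\in\rmZ_1^{1,1}$, so that $f(\alpha\ot\beta)=f(\alpha)\ot f(\beta)=f(\alpha)\,\alpha(f(\beta))$ by~\eqref{eq:rewrite}, put $u(G)=f(\iota_G)$ for the unique homomorphism $\iota_G\colon 1\to G$, and use $\alpha\iota_G=\iota_H$ to rewrite $f(\alpha)=u(H)\,\alpha(u(G))^{-1}$, which exhibits $f\in\rmB_1^{1,1}$. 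Over the class~$0$ every object $(P,p)$ has each $P(G)$ conjugate, hence equal, to the trivial endomorphism; the data $p(\alpha)$ then take values in $\rmC$ of the trivial homomorphism, namely in~$H$, and obey the same cocycle, so the identical manipulation with $c(G)=p(\iota_G)$ produces an isomorphism in $\bfZ(\bbG)$ between $(P,p)$ and the canonical lift of~$0$, whence $[(P,p)]=\phi(0)$. Thus both fibers are singletons and $\psi$ is a bijection.

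The genuinely non-formal point is this injectivity argument. The coherence isomorphisms $p(\alpha)$ live in the centralizers $\rmC(\alpha)$, which by the remark following~\eqref{eq:Zalpha} need not be abelian, so one cannot appeal to the vanishing of an abelian cohomology group. The device that unlocks both fiber computations is to evaluate the cocycle on the unique homomorphisms out of the trivial group: this manufactures a single family $(u(G))$, respectively $(c(G))$, that trivializes it uniformly. I expect the only real work to be checking that these families land in the required subgroups---$u(G)\in\rmZ(G)$ follows from the conjugation-equivariance built into $\rmE_1^{1,1}$, while $c(G)\in\rmC(\iota_G)=G$ is automatic---and that the resulting coboundary identity is precisely the defining relation of $\rmB_1^{1,1}$, respectively of isomorphism in $\bfZ(\bbG)$.
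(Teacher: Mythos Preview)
Your proposal is correct and follows essentially the same route as the paper: both arguments run the obstruction theory of Section~\ref{sec:ss}, compute $\rmE_2^{0,1}=0$ by evaluating on the trivial homomorphism, and kill $\rmE_2^{1,1}$ by the same trick of evaluating a cocycle on the unique maps $\iota_G\colon 1\to G$ to manufacture a trivializing family. The one place where you are more careful than the paper is in treating the fiber of~$\psi$ over~$0$ separately; the paper invokes Proposition~\ref{prop:E211=kernel} as if it controlled all fibers, whereas you correctly observe that in a monoid a trivial kernel does not by itself force injectivity, and you rerun the cocycle argument with values in $\rmC(\text{trivial})=H$ to handle that fiber directly.
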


\begin{proof}
Let us first deal with the one entry in the spectral sequence that has~$t=0$. We already know that there is an upper bound~\hbox{$\rmE^{0,0}_2\leqslant\rmZ(\bfB=\bfHo(\bbG))$} 
by Proposition~\ref{prop:E200<=ZHo}. The center of the classifying category has been determined in the preceding Proposition~\ref{prop:ordinary_groups}. 
That result also makes it clear that all elements in the center of the classifying category lift to the Drinfeld center of~$\bbG$; 
they even lift to the center of the underlying category~$\bfG$. We deduce that the obstructions vanish.

Let us now deal with the two entries in the spectral sequence that have~$t=1$ and that determine the kernel of the map~$\Iso\bfZ(\bbG)\to\rmZ(\bfB=\bfHo(\bbG))$ 
and the automorphism group of the identity object in~$\bfZ(\bbG)$. We have
\[
\rmE_1^{0,1}=\prod_F\Aut_{\bbG(F,F)}(\id)\cong\prod_F\rmZ(F)
\]
by~\eqref{eq:Zalpha}, and we record that this is an abelian group. 

In order to determine the entry~$\rmE_1^{1,1}$, we start with the definition:
\[
\rmE_1^{1,1}=\prod_{G,H}\Aut_{\bfEnd(\bbG(G,H))}(\id).
\]
We know that the category~$\bbG(G,H)=\bfMor_\bbG(G,H)$ is a groupoid, and as such it is equivalent to the sum of the groups~$\rmC(\alpha)$, where~$\alpha$ runs through a system of representatives of~$\Rep(G,H)$ in~$\bfMor_{\bbG}(G,H)$. Since we are considering automorphisms of the identity object, we get
\[
\Aut_{\bfEnd(\bbG(G,H))}(\id)
\cong\prod_{[\,\alpha\,]\in\Rep(G,H)}\rmZ\rmC(\alpha),
\]
the product of the centers of the centralizers. We note again that this is an abelian group. This leaves us with 
\[
\rmE_1^{1,1}\cong\prod_{G,H}\;\prod_{[\,\alpha\,]\in\Rep(G,H)}\rmZ\rmC(\alpha).
\]

The differential~$\rmd_1\colon\rmE_1^{0,1}\to\rmE_1^{1,1}$ is the difference of the two coface homomorphisms. 
Therefore, up to an irrelevant sign, it is given on a family~$P=(P(F)\in\rmZ(F)\,|\,F)$ by
\begin{equation}\label{eq:diff1}
(\rmd_1 P)(\alpha)=P(H)-\alpha(P(G))\in\rmZ\rmC(\alpha)
\end{equation}
in the factor of~$\alpha\colon G\to H$. It follows that the~$\rmE_2^{0,1}$ entry in the spectral sequence consists of those families~$P$ such that~$P(H)=\alpha(P(G))$ for all~$G$,~$H$, 
and~$\alpha\colon G\to H$. Taking~$\alpha$ to be constant, we see that~$P(F)$ has to be trivial for all~$F$. This shows~$\rmE_2^{0,1}=0$. 
Therefore, by Proposition~\ref{prop:E201=Aut}, we deduce that~$\Aut_{\bfZ(\bbG)}(E,e)$ is indeed trivial. 

It remains to be shown that there are no more components than we already know.  Proposition~\ref{prop:E211=kernel} says that these are indexed by the group~$\rmE_2^{1,1}$. 
This group can be calculated as follows. Its elements are represented by elements in the subgroup~$\rmZ_1^{1,1}\leqslant\rmE_1^{1,1}$, 
the subgroup of elements~\hbox{$Q=(Q(\alpha)\in\rmZ\rmC(\alpha)\,|\,\alpha)$} such that
\begin{equation}\label{eq:cocycle}
Q(\gamma\beta)=Q(\gamma)+\gamma(Q(\beta)),
\end{equation}
again up to an irrelevant sign. We claim that each family with that property is already in the distinguished subgroup~$\rmB_1^{1,1}$, 
so that~$\rmZ_1^{1,1}=\rmB_1^{1,1}$ and~$\rmE_2^{1,1}=\rmZ_1^{1,1}/\rmB_1^{1,1}=0$.

That subgroup~$\rmB_1^{1,1}$ is the image of the differential~$\rmd_1$. Therefore, to prove the claim, let us be given a family~$Q=(Q(\alpha)\in\rmZ\rmC(\alpha)\,|\,\alpha)$ such that~\eqref{eq:cocycle} holds. 
We can then evaluate this family at the unique homomorphisms~$\alpha=\epsilon_F$ from the trivial group to~$F$, for each finite group~$F$, to obtain a family~$P(F)=Q(\epsilon_F)$, 
and that family is our candidate for an element~$P$ to hit the element~$Q$ under the differential~$\rmd_1$. And indeed, equation~\eqref{eq:cocycle} for~$\alpha=\gamma$ and~$\beta=\epsilon_G$ gives
\[
Q(\epsilon_H)=Q(\alpha \epsilon_G)=Q(\alpha)+\alpha(Q(\epsilon_G)).
\]
Rearranging this yields the identity
\[
Q(\alpha)=Q(\epsilon_H)-\alpha(Q(\epsilon_G))=P(H)-\alpha(P(G))=(\rmd_1 P)(\alpha),
\]
and this shows that~$Q$ is indeed in the image. We have proved the claim. 
\end{proof}

It seems reasonable to expect that similar arguments will determine the Drinfeld centers of related bicategories such as the ones coming from groupoids or topological spaces, etc. 
This will not be pursued further here. 
Instead, we will now turn our attention towards a class of examples that indicates the wealth of obstructions and nontrivial differentials that one can expect in general.


\section{Applications to fusion categories}\label{sec:fusion}

Fusion categories are tensor categories with particularly nice properties. They arise in many areas of mathematics and mathematical physics, such as operator algebras, conformal field theory, and Hopf algebras. A general theory of such categories has been developed in~\cite{ENO}. In this section, we use the theory developed so far in order to explain some constructions related to the centers of fusion categories that otherwise may seem to appear {\em ad hoc}. For completeness, let us start with the definition.

\begin{definition}
A {\em fusion category} is a semisimple abelian tensor category~$(\bfF,\otimes,I)$ over a field~$\frK$ of characteristic zero, usually assumed to be algebraically closed, 
with finitely many simple objects, such that~$\otimes$ is bilinear, and such that each object has a dual object.
\end{definition}

We are here interested in the bicategory~$\bbB(\bfF)$ with one object that is associated with such a fusion category~$\bfF$ as explained in Example~\ref{ex:tensor_categories}, and its Drinfeld center. Since fusion categories are special cases of tensor categories, this refers to the usual notion of a Drinfeld center of a tensor category, and as such it has been studied in other places. For example, the papers~\cite{Mueger},~\cite{Ostrik},~\cite{GNN09}, and~\cite{BV13} contain various results on the centers of fusion categories and related categories.

We will show that the spectral sequence introduced in Section~\ref{sec:ss} offers a systematic approach to the computation of the basic invariants of the Drinfeld center, by interpreting the different terms and differentials in the language of fusion categories.


\subsection{The first page}

We start by identifying the terms on the first page of the spectral sequence.

\begin{proposition}
If a fusion category~$\bfF$ has~$n$ isomorphism classes of simple objects, then the monoid~$\rmE^{0,0}_1$ can be identified as a set with~$\bbN^n$, the~$n$-fold product of the monoid~$\bbN$ of non-negative integers. The multiplication is given by the fusion coefficients. 
\end{proposition}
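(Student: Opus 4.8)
The plan is to unwind the definition of $\rmE_1^{0,0}$ in the one-object case and then read off everything from semisimplicity. First I would observe that the bicategory $\bbB(\bfF)$ has the single object $\star$ with $\bfMor_{\bbB(\bfF)}(\star,\star)=\bfF$, so the product $\rmE_1^{0,0}=\prod_{x}\Iso\bbB(x,x)$ over the objects of the bicategory collapses to the single factor $\Iso(\bfF)$, the monoid of isomorphism classes of objects of $\bfF$ under the tensor product $\ot$, with unit the class of the tensor unit $I$.

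Next I would fix representatives $X_1,\dots,X_n$ for the $n$ isomorphism classes of simple objects. Since $\bfF$ is semisimple, every object is isomorphic to a finite direct sum $\bigoplus_i a_i X_i$, and by the Krull--Schmidt property the multiplicities $a_i\in\bbN$ are uniquely determined. This produces the bijection of sets
\[
\Iso(\bfF)\;\cong\;\bbN^n,\qquad \Big[\,\textstyle\bigoplus_i a_i X_i\,\Big]\longmapsto(a_1,\dots,a_n),
\]
which in passing shows that $\Iso(\bfF)$ is a genuine set. Note that this bijection uses only the additive structure $\oplus$.

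Finally I would transport the monoid operation along this bijection. Recording the fusion coefficients $N_{ij}^k\in\bbN$ by $X_i\ot X_j\cong\bigoplus_k N_{ij}^k X_k$, the bilinearity of $\ot$ over $\oplus$---which is part of the fusion category axioms---lets me expand the product of $\bigoplus_i a_i X_i$ and $\bigoplus_j b_j X_j$ and collect terms, so that the $k$-th coordinate of the product of $(a_i)$ and $(b_j)$ is $\sum_{i,j} a_i b_j N_{ij}^k$. This is precisely the multiplication ``given by the fusion coefficients''.

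I do not expect a serious obstacle here: the statement is essentially a repackaging of the fact that $\Iso(\bfF)$ is the underlying multiplicative monoid of the Grothendieck semiring of $\bfF$. The only point worth stating carefully is that the \emph{set}-level identification with $\bbN^n$ comes from the additive decomposition into simples, whereas the \emph{monoid} operation carried by $\rmE_1^{0,0}$ is the multiplicative $\ot$; the two are linked exactly by the bilinearity of $\ot$, which is what makes the fusion coefficients govern the monoid structure.
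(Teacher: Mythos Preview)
Your proof is correct and follows essentially the same approach as the paper: identify $\rmE_1^{0,0}$ with $\Iso(\bfF)$, use semisimplicity to decompose objects into simples for the bijection with $\bbN^n$, and observe that the monoid structure is the tensor product rather than the direct sum. Your version is more detailed---you invoke Krull--Schmidt explicitly and write out the product formula $\sum_{i,j}a_ib_jN_{ij}^k$---whereas the paper simply records the bijection and remarks that the direct sum and tensor product are distinct operations.
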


\begin{proof}
The monoid~$\rmE^{0,0}_1$ is the set of isomorphism classes of objects in our category. 
If the different simple objects of~$\bfF$ are~$X_1,\ldots,X_n$, then there is a canonical identification of~$\rmE^{0,0}_1$ with~$\bbN^{n}$, given by 
\[
(a_1,\ldots,a_n)\longleftrightarrow X_1^{\oplus a_1}\oplus\cdots\oplus X_n^{\oplus a_n}.
\] 
Notice that the direct sum on~$\bfF$ and the tensor product are two different operations.
\end{proof}

An element of the monoid~$\rmE^{1,0}_1$ is just an isomorphism class of endofunctors~$\bfF\to\bfF$. 

\begin{proposition}
If a fusion category~$\bfF$ over a field~$\frK$ has~$n$ isomorphism classes of simple objects, with representatives~$X_1,\dots,X_n$, then there are isomorphisms
\begin{align*}
\rmE_1^{0,1}&\cong \frK^\times\\
\rmE_1^{1,1}&\cong (\frK^\times)^n\\
\rmE_1^{2,1}&\cong \prod_{i,j}\Aut(X_i\otimes X_j).
\end{align*}
\end{proposition}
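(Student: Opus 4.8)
The plan is to specialize the general $\rmE_1$-terms to the one-object bicategory $\bbB=\bbB(\bfF)$, where the only object is~$\star$ and $\bbB(\star,\star)=\bfF$, so that every product over objects collapses to a single factor with~$x=y=z=\star$. The computation of all three groups then rests on one recurring principle that I would isolate first: since~$\bfF$ is semisimple and~$\frK$-linear with simple objects~$X_1,\dots,X_n$, any additive natural transformation of a functor is determined by its components on the simples, each such component being governed by Schur's lemma, while the naturality constraints relating distinct simples are automatic because~$\bfF(X_i,X_j)=0$ for~$i\neq j$.

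First I would treat~$\rmE_1^{0,1}=\Aut_\bfF(\Id(\star))$. The identity~$1$-morphism~$\Id(\star)$ is the tensor unit~$I$, which is simple in a fusion category, so Schur's lemma gives~$\mathrm{End}_\bfF(I)=\frK$ and hence~$\Aut_\bfF(I)=\frK^\times$. Next, for~$\rmE_1^{1,1}=\Aut_{\bfEnd(\bfF)}(\id)$, a natural automorphism of the identity functor assigns to each object an automorphism compatible with all morphisms. Using semisimplicity I would reduce to the simples: such a transformation is determined by one scalar~$\lambda_i\in\frK^\times$ on each~$X_i$ by Schur, with no relation imposed between distinct~$\lambda_i$ since there are no nonzero maps between non-isomorphic simples. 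This yields~$\rmE_1^{1,1}\cong(\frK^\times)^n$.

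Finally, for~$\rmE_1^{2,1}=\rmE_1^{2,1}(E)$, I would first observe that the relevant functor~$E\otimes?\otimes??$, namely~$(M,N)\mapsto E(\star)\ot M\ot N=I\ot M\ot N$, is naturally isomorphic to the tensor product functor~$\ot\colon\bfF\times\bfF\to\bfF$ via the unit constraint, so that~$\rmE_1^{2,1}\cong\Aut_{\bfFun(\bfF\times\bfF,\bfF)}(\ot)$. I would then analyze natural automorphisms of~$\ot$. Because~$\ot$ is bilinear and~$\bfF$ is semisimple, such a transformation is determined by its components on pairs of simples~$(X_i,X_j)$, each an element of~$\Aut(X_i\ot X_j)$; assembling these gives~$\rmE_1^{2,1}\cong\prod_{i,j}\Aut(X_i\ot X_j)$.

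The main obstacle, and the only step that goes beyond the one-variable Schur argument, is this two-variable reduction: I must verify that a natural automorphism of~$\ot$ really is cut out by independent, otherwise unconstrained choices in each~$\Aut(X_i\ot X_j)$. Concretely, the point is that the naturality squares of~$\theta$ with respect to a morphism in the first or second variable only couple components indexed by isomorphic simples, and these couplings are vacuous by the vanishing-$\mathrm{Hom}$ property; bilinearity of~$\ot$ then propagates the datum on simples to all objects by additivity. Once this is checked, the three isomorphisms follow directly, and I would note in passing that each factor~$\Aut(X_i\ot X_j)$ can be expanded via the fusion coefficients into a product of general linear groups over~$\frK$, though the stated form suffices.
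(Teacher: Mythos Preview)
Your proposal is correct and follows essentially the same approach as the paper: specialize the general~$\rmE_1$-terms to the one-object bicategory~$\bbB(\bfF)$, then use simplicity of the unit and semisimplicity of~$\bfF$ to reduce each automorphism group to its values on simple objects (respectively pairs of simples). You are somewhat more explicit than the paper about why the reduction works---invoking Schur's lemma and the vanishing of~$\bfF(X_i,X_j)$ for~$i\neq j$---and you spell out the identification of~$E\otimes{?}\otimes{??}$ with~$\otimes$ via the unit constraint, which the paper leaves implicit; but the argument is the same.
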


\begin{remark} 
In every fusion category with simple objects~$X_1,\dots,X_n$, the automorphism group of the object~\hbox{$X_1^{\oplus a_1}\oplus\cdots\oplus X_n^{\oplus a_n}$} 
is the group~$\prod_i\GL_{a_i}(\frK)$. All of the groups in the preceding proposition have this form.
\end{remark}

\begin{proof}
The group~$\rmE^{0,1}_1$ is the automorphism group of the tensor identity~$I$ of~$\bfF$. 
In a fusion category, the tensor identity is simple, and the endomorphism ring of each simple object is~$\frK$. Therefore this group is isomorphic to~$\frK^\times$.

The group~$\rmE^{1,1}_1$ is the automorphism group of the identity functor~$\id_\bfF\colon\bfF\to \bfF$. 
In the case of a fusion category, such an automorphism~$\alpha$ is specified by giving~$\alpha_i\colon X_i\to X_i$ for each~\hbox{$i=1,\ldots,n$}. 
In other words, such an automorphism is given by a set of~$n$ invertible scalars, and the group is isomorphic with~$(\frK^\times)^n$. 
Now, the two maps we have~$\rmE^{0,1}_1\to\rmE^{1,1}_1$ are the same. They are given by the diagonal embedding~$\frK^\times\to (\frK^\times)^n$. 

The group~$\rmE^{2,1}_1$ is the automorphism group of the tensor functor~$\otimes\colon \bfF\times\bfF\to\bfF$. 
Any such automorphism is given by a set of~$n^2$ invertible morphisms~$\beta_{i,j}\colon X_i\otimes X_j\to X_i\otimes X_j$. 
We can thus identify this group with the product~$\prod_{i,j}\Aut(X_i\otimes X_j)$.
\end{proof}


\subsection{The first differentials and the center of the classifying category}

The following result computes the first differential and the center of the classifying category.

\begin{proposition}
The abelian monoid~$\rmE^{0,0}_2$ is isomorphic to~$\rmZ(\bfHo(\bbB(\bfF)))$, the center of the classifying category.
\end{proposition}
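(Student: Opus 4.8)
The plan is to describe both sides as submonoids of $\rmE_1^{0,0}=\Iso(\bfF)$ and then show that they coincide as subsets. Since $\bbB(\bfF)$ has a single object $\star$, the classifying category $\bfHo(\bbB(\bfF))$ is the monoid $\Iso(\bfF)$ regarded as a one-object category, and its center $\rmZ(\bfHo(\bbB(\bfF)))$ is precisely the center of this monoid: the classes $[\,P\,]$ for which $P\ot M\cong M\ot P$ for every object $M$ of $\bfF$. On the other hand, by Definition~\ref{def:E200} the monoid $\rmE_2^{0,0}$ consists of those classes $[\,P\,]$ for which the two endofunctors $\rmL_P=P\ot-$ and $\rmR_P=-\ot P$ are \emph{naturally} isomorphic. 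Proposition~\ref{prop:E200<=ZHo} already supplies the inclusion $\rmE_2^{0,0}\leqslant\rmZ(\bfHo(\bbB(\bfF)))$, so the only thing left is the reverse inclusion.

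For the reverse inclusion I would exploit semisimplicity. Suppose $[\,P\,]\in\rmZ(\bfHo(\bbB(\bfF)))$, so that $P\ot M\cong M\ot P$ for all $M$; in particular we may choose isomorphisms $\psi_i\colon P\ot X_i\to X_i\ot P$ on a set of representatives $X_1,\dots,X_n$ of the simple objects. The key step is the standard fact that a $\frK$-linear endofunctor of a semisimple category is determined up to natural isomorphism by its values on the simple objects, so that two additive functors agreeing on simples are already naturally isomorphic. Concretely, both $\rmL_P$ and $\rmR_P$ are additive, so after decomposing an arbitrary object as a direct sum of simples one extends the $\psi_i$ by direct sums to an isomorphism $\psi_M\colon P\ot M\to M\ot P$ for every $M$. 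Naturality is then automatic: by Schur's lemma the only morphisms between simples are zero (between non-isomorphic ones) and scalar multiples of the identity (on a single simple), and both $\rmL_P$ and $\rmR_P$ send such a scalar to the same scalar, so every naturality square commutes. Assembling the $\psi_M$ yields the natural isomorphism $\rmL_P\cong\rmR_P$, whence $[\,P\,]\in\rmE_2^{0,0}$.

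The main obstacle is exactly this upgrade from an objectwise to a natural isomorphism, and it is here that the fusion hypotheses are indispensable. In the language of the spectral sequence, the inclusion $\rmE_2^{0,0}\leqslant\rmZ(\bfHo(\bbB(\bfF)))$ can be strict precisely when the tautological map $\tau$ of Remark~\ref{rem:E2confusion} fails to be injective on the functors $\rmL_P$ and $\rmR_P$, that is, when the two functors induce the same map on isomorphism classes of objects without being naturally isomorphic. Semisimplicity---finitely many simples, the vanishing of $\mathrm{Hom}$ between non-isomorphic simples, and $\mathrm{End}(X_i)\cong\frK$---closes this gap: the natural-isomorphism class of an additive endofunctor is faithfully recorded by its effect on isomorphism classes of simple objects, which is exactly the data that $\tau$ retains. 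This is what forces the two submonoids of $\Iso(\bfF)$ to agree and gives the desired isomorphism.
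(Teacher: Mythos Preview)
Your proof is correct and follows essentially the same approach as the paper's. Both arguments reduce to the key fact that in a semisimple category an additive endofunctor is determined up to natural isomorphism by its values on the simple objects, so that objectwise isomorphism $P\ot M\cong M\ot P$ upgrades to a natural isomorphism $\rmL_P\cong\rmR_P$; you spell out the Schur's lemma step and the connection to the tautological map~$\tau$ more explicitly, but the strategy is identical.
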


\begin{proof}
The first differential~$\rmE^{0,0}_1\to\rmE^{1,0}_1$ is given by two maps, one which sends an object~$X$ to the functor~$\rmL_X\colon Y\mapsto X\otimes Y$ and the other one maps~$X$ to the~(isomorphism classes of the) functor~$\rmR_X\colon Y\mapsto Y\otimes X$. We are interested in the equalizer. 
These consist of all the~(isomorphism classes of) objects~$X$ for which there exists an isomorphism (of functors) 
between~$\rmL_X$ and~$\rmR_X$. For fusion categories, this is the same as the center of the classifying category. Indeed, since a fusion category~$\bfF$ is semi-simple, an additive functor~$\bfF\to\bfF$ is determined by its restriction to simple objects,
and the tensor product with a given object is an additive functor.
\end{proof}


\subsection{The universal grading group}

For each fusion category~$\bfF$, we define~$\bfF_{\mathrm{ad}}$ to be the fusion subcategory of~$\bfF$ consisting of all the direct summands of all objects of the form~$X_i\otimes X_i^*$. Notice that we take only the products of simple objects with their duals.
Then~$\bfF$ has a faithful grading by a group~$\rmU(\bfF)$, the {\em universal grading group of~$\bfF$} such that~$\bfF_{\mathrm{ad}}$ is exactly the trivially graded component. 
Moreover, each other faithful grading of~$\bfF$ is a quotient of this grading. See~\cite{GN08} for more details.

\begin{proposition}
There are isomorphisms
\begin{align*}
\rmE_2^{0,1}&\cong \frK^\times\\
\rmE_2^{1,1}&\cong\widehat{\rmU(\bfF)},
\end{align*}
where~$\widehat{\rmU(\bfF)}$ denotes the character group of the universal grading group of~$\bfF$.
\end{proposition}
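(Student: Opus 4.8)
The plan is to treat the two isomorphisms separately, observing first that both reduce to the already-computed first page together with the definitions of $\rmE_2^{0,1}$ and $\rmE_2^{1,1}$. For the column $t-s=1$, recall that $\rmE_2^{0,1}$ is the equalizer of the two coface maps $\rmd_1',\rmd_1''\colon\rmE_1^{0,1}\to\rmE_1^{1,1}$. We have already seen that for a fusion category these two maps coincide, both being the diagonal embedding $\frK^\times\to(\frK^\times)^n$. Hence the equalizer is all of $\rmE_1^{0,1}$, and $\rmE_2^{0,1}\cong\frK^\times$. The same observation shows that the subgroup $\rmB_1^{1,1}$, being the image of $\rmd_1=\rmd_1'-\rmd_1''$, is trivial, so that $\rmE_2^{1,1}=\rmZ_1^{1,1}/\rmB_1^{1,1}$ is simply $\rmZ_1^{1,1}$. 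The whole problem therefore reduces to computing the group $\rmZ_1^{1,1}$.

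By the reformulation in \eqref{eq:rewrite}, $\rmZ_1^{1,1}$ consists of those natural automorphisms $f$ of the identity functor of $\bfF$ that are monoidal, i.e. satisfy $f(M\otimes N)=f(M)\otimes f(N)$. Writing such an $f$ in terms of the scalars $f_i\in\frK^\times$ by which it acts on the simple object $X_i$, naturality forces $f$ to act on each simple summand $X_k$ of $X_i\otimes X_j$ by $f_k$, while monoidality forces this to equal $f_if_j$. Thus $\rmZ_1^{1,1}$ is exactly the group of functions $i\mapsto f_i$ into $\frK^\times$ satisfying
\[
f_k=f_if_j\qquad\text{whenever }X_k\text{ is a summand of }X_i\otimes X_j.
\]
I would then identify this group with $\widehat{\rmU(\bfF)}=\mathrm{Hom}(\rmU(\bfF),\frK^\times)$. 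First, taking the unit $I\subseteq X_i\otimes X_i^*$ gives $f_I=1$ and $f_{i^*}=f_i^{-1}$, whence $f_k=1$ for every simple summand $X_k$ of any $X_i\otimes X_i^*$; that is, $f$ is trivial on the adjoint subcategory $\bfF_{\mathrm{ad}}$. Next, if $\deg X_i=\deg X_j$ in the universal grading group, then $X_j^*\otimes X_i$ lies in the trivial component $\bfF_{\mathrm{ad}}$, so each of its (existing) simple summands $X_k$ satisfies both $f_k=1$ and $f_k=f_j^{-1}f_i$, giving $f_i=f_j$. Hence $f$ depends only on the degree, and since the grading is faithful (so $\deg$ is surjective) the multiplicativity condition translates into $\bar f(gh)=\bar f(g)\bar f(h)$ for the induced function $\bar f\colon\rmU(\bfF)\to\frK^\times$. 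Conversely every character $\chi$ of $\rmU(\bfF)$ yields such an $f$ by $f_i=\chi(\deg X_i)$, and these two assignments are mutually inverse, giving $\rmZ_1^{1,1}\cong\widehat{\rmU(\bfF)}$ and hence $\rmE_2^{1,1}\cong\widehat{\rmU(\bfF)}$.

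The only genuinely nontrivial step is this identification of $\rmZ_1^{1,1}$ with the character group, and within it the claim that $f$ factors through the degree map; everything else is bookkeeping on the first page. The crux is recognizing that the defining property of the universal grading group—that its trivial component is precisely $\bfF_{\mathrm{ad}}$—is exactly what makes the monoidality constraint collapse a $\frK^\times$-datum on the $n$ simples down to a character on $\rmU(\bfF)$: the adjoint subcategory records the relations $f_if_{i^*}=1$ and their consequences, while faithfulness of the grading guarantees that no further relations are imposed. I would cite \cite{GN08} for the properties of $\rmU(\bfF)$ used above, namely faithfulness of the grading, surjectivity of $\deg$, the relation $\deg(X^*)=\deg(X)^{-1}$, and the identity $\bfF_e=\bfF_{\mathrm{ad}}$.
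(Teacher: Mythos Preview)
Your proof is correct and follows essentially the same route as the paper's first argument: both identify $\rmZ_1^{1,1}$ directly with $\widehat{\rmU(\bfF)}$ via $f\mapsto(f_i)=(\phi(\deg X_i))$, after observing that $\rmd_1'=\rmd_1''$ forces $\rmB_1^{1,1}=0$. You supply considerably more detail than the paper does---in particular the verification that the multiplicativity constraint forces $f$ to be trivial on $\bfF_{\mathrm{ad}}$ and hence to factor through $\deg$---whereas the paper simply writes down the bijection; the paper also notes a second proof, via Proposition~\ref{prop:E211=kernel}, identifying $\widehat{\rmU(\bfF)}$ directly with the kernel of the characteristic homomorphism by sending $\phi$ to the central object $(I,\phi)$.
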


\begin{proof}
The first claim follows from the fact that the differential from~$\rmE^{0,1}_1$ to~$\rmE^{1,1}_1$ is zero. 
This fact also implies that the group~$\rmB_1^{1,1}$ is the trivial group.

As for the second one, the three maps~$\rmE_1^{1,1}\to\rmE_1^{2,1}$ are the following. The first is given by sending a family~$(\alpha_i)$ to the family~$(\alpha_i)_{i,j}$. The second sends a family~$(\alpha_i)$ to the family~$(\alpha_j)_{i,j}$. The third is more complicated: It sends a family~$(\alpha_i)$ to~$(\beta_{i,j})_{i,j}$, where~$\beta_{i,j}$ acts by the scalar~$\alpha_k$ on the~$X_k$ component inside~$X_i\otimes X_j$. We can now offer two proofs, based on Proposition~\ref{prop:E211=kernel}.

First, we can identify~$Z^{1,1}_1$ and~$\widehat{\rmU(\bfF)}$: For each~$i=1,\ldots,n$, let~$g_i\in \rmU(\bfF)$ be the degree of~$X_i$ by the universal grading. 
Then the element in~$Z^{1,1}_1$ which corresponds to~$\phi\in\widehat{\rmU(\bfF)}$ is~$(\phi(g_i))$. We thus get an isomorphism~$\rmE^{1,1}_2\cong\widehat{\rmU(\bfF)}$. 

Second, we can also identify~$\widehat{\rmU(\bfF)}$ directly with the kernel of the characteristic homomorphism~$\Iso(\bfZ(\bbB(\bfF)))\to\rmZ(\bfHo(\bbB(\bfF)))$: 
Characters of the universal grading group~$\rmU(\bfF)$ are in one to one correspondence with objects of the Drinfeld center whose underlying object is the tensor unit. The central object corresponding to a character~$\phi$ is~$(I,\phi)$, the tensor unit~$I$, together with the isomorphism~$I\otimes X_i\to X_i\otimes I$ given by the scalar~$\phi(g_i)$. Here we identify both objects with~$X_i$ in the canonical way, and the endomorphism ring of~$X_i$ is~$\frK$.
\end{proof}

In conclusion, we have the following exact sequence:
\[
0\longrightarrow\widehat{\rmU(\bfF)}\longrightarrow\Iso(\bfZ(\bbB(\bfF)))\longrightarrow\rmZ(\bfHo(\bbB(\bfF)))
\]
for any fusion category~$\bfF$. It identifies the character group of the universal grading group as the measure of the failure of the center of the classifying category to detect information that is contained in the richer Drinfeld center.


\subsection{The obstruction differential}

The last part which we need to understand in the spectral sequence is the obstruction differential on the second page, that is~$\rmd_2\colon\rmE_2^{0,0}\to\rmE_2^{2,1}(?)$.
We have seen in Section~\ref{sec:obstructions} that the target~$\rmE_2^{2,1}(?)$ depends on the argument. Let us explain the obstruction that the differential encodes.

The monoid~$\rmE_2^{0,0}$ contains all objects~$X$ of~$\bfF$ for which the functors~$\rmL_X$ and~$\rmR_X$ are isomorphic. 
We will furnish a structure of a central object on~$X$ if we can find an isomorphism of functors~$\alpha\colon\rmL_X\to\rmR_X$ such that the following diagram will be commutative for every~$Y,Z\in\bfF$.
\begin{equation}\label{obst}
\xymatrix{ 
&X\otimes(Y\otimes Z) \ar[r]^{\alpha_{Y\otimes Z}}& (Y\otimes Z)\otimes X\ar[rd]\\
(X\otimes Y)\otimes Z\ar[rd]_{\alpha_Y\otimes\id_Z} \ar[ru] & & & Y\otimes(Z\otimes X)\\
 & (Y\otimes X)\otimes Z\ar[r] & Y\otimes (X\otimes Z) \ar[ur]_{\id_Y\otimes \alpha_Z} & }
\end{equation}
In other words, if for all choices of~$\alpha$, the identity is in the set of loops of diagram~\eqref{obst}.

Of course, different choices of~$\alpha$ might furnish different central structures on~$X$. If we take~$X$ to be the tensor unit, this obstruction indeed lies in~$\rmE^{2,1}_2(I)$.


\subsection{Vector spaces graded by groups}

Let~$G$ be a finite group, and assume that our fusion category~$\bfF$ is~$\bfVec_G^{\omega}$, that is, the category of~$G$-graded vector spaces, in which the associativity constraint is deformed by the class~\hbox{$[\,\omega\,]\in\rmH^3(G,\frK^\times)$} of a~$3$-cocycle~$\omega$, as in~\cite[Section~2]{ENO}. Briefly, the simple objects~$X_g$ in~$\bfVec_G^{\omega}$ are indexed by the elements~$g$ of the group~$G$. The tensor product is given by~$X_g\ot X_h=X_{gh}$ and the associativity constraint
\[
X_{ghk}=(X_g\ot X_h)\ot X_k\longrightarrow X_g\ot (X_h\ot X_k)=X_{ghk}
\]
is given by the scalar~$\omega(g,h,k)$. We can think of this category as the category of vector bundles over the set~$G$, with deformed associativity constraints.

The automorphism group of the neutral object in the Drinfeld center is given by~$\frK^\times$, as for every fusion category. 

The abelian monoid of isomorphism classes of objects is more interesting in this case: We can view~$\rmE_1^{0,0}$ as the multiplicative monoid underlying the group semi-ring~$\bbN G$, and then the elements of~$\rmE_2^{0,0}$ are exactly the central elements. The universal grading group~$\rmU(\bfVec_G^{\omega})$ is given by the group~$G$. This shows that the characteristic homomorphism is not injective as soon as the group~$G$ admits a non-trivial character.

These observations determine the~$\rmE_2$-page of the spectral sequence except for the obstruction differential.

Let us begin by understanding the obstruction differential~$\rmd_2$ in case we are taking a central element~$z\in G$, 
and ask if the corresponding simple object~$X_z$ has a structure of a central object. By inspecting the diagram~\eqref{obst} above, 
we get that~$z$ and~$\omega$ together furnish a~$2$-cocycle~$\gamma_{\omega,z}$ on~$G$, given by 
\[
\gamma_{\omega,z}(g,h)=\omega(g,h,z)\omega^{-1}(g,z,h)\omega(z,g,h).
\]
Assume that an object~$X_z$ has a structure of a central object. We can then choose isomorphisms~$\phi_g\colon X_g\ot X_z\rightarrow X_z\ot X_g$ for every element~$g$ in~$G$,
such that diagram~\eqref{obst} is commutative, with~$X=X_g$,~$Y=X_h$ and~$Z=X_z$.
We have~\hbox{$X_z\ot X_g=X_{zg}=X_{gz}=X_g\ot X_z$}.
This means that the isomorphism~$\phi_g$ can be given by a scalar.
By inspecting the diagram~\eqref{obst} again, we see that its commutativity boils down to the equation~$\del(\phi)=\gamma_{\omega,z}$.
In other words, the object~$X_z$ will have a central structure if and only if the class of~$\gamma_{\omega,z}$ vanishes in~$\rmH^2(G,\frK^{\times})$.
In that case, the different central structures on~$X_z$ are in one to one correspondence with characters of~$G$, as in the case when~$z$ is the neutral element of our group~$G$.

More generally, we can ask when the direct sum~$X_z^{\oplus m}$ of~$m$ copies of~$X_z$ will have a central structure. For this, we need the twisted group algebra~$\frK^{\gamma_{\omega,z}}G$. This~$\frK$-algebra has a~$\frK$-basis~$\{\,u_g\,|\,g\in G\,\}$ and the multiplication is given by
\[
u_gu_h=\gamma_{\omega,z}(g,h)u_{gh}.
\]
The~$2$-cocycle condition ensures that this algebra is associative.
Now, the automorphism group of the object~$X_z^{\oplus m}$ is isomorphic to the general linear group~$\GL_m(\frK)$.
The commutativity of the diagram~\eqref{obst} with~$Z=X_z^{\oplus m}$ will mean now that we can choose elements~$v_g\in\GL_m(\frK)$
such that~$v_gv_h=\gamma_{\omega,z}(g,h)v_{gh}$. In other words, the algebra~$\frK^{\gamma_{\omega,z}}G$ has an~$m$ dimensional representation,
and the different structures of central objects on~$X_z^{\oplus m}$ correspond to the different isomorphism classes of those representations of the twisted group algebra~$\frK^{\gamma_{\omega,z}}G$ that have dimension~$m$. Since~$\frK^{\gamma_{\omega,z}}G$ is finite-dimensional and semi-simple when~$\mathrm{char}(\frK)=0$, there are only finitely many such isomorphism classes.

The general obstructions can be understood now in a similar way: If~$g_1,\ldots,g_r$ are representatives of the conjugacy classes of~$G$, 
then we denote by~$Y_i$ the direct sum of all~$X_g$ such that~$g$ is conjugate to~$g_i$. We denote by~$y_i$ the corresponding sum in~$\bbN G$. 
Then any central element in~$\bbN G$ can be written as a sum~$\sum_i a_i y_i$ for~$a_i\in \bbN$. The possible central structures on the object~$\oplus_i Y_i^{\oplus a_i}$ are given by tuples~$([\,V_i\,])_i$ where~$[\,V_i\,]$ is an isomorphism class of a representation of~$\frK^{\gamma_{\omega,g_i}}\rmC_G(g_i)$ of dimension~$a_i$.

\begin{proposition}
For every group~$G$, class~$\omega\in\rmH^3(G,\frK^{\times})$, and central element~$z\in \rmZ(G)$ such that~$\gamma_{\omega,z}$ is non trivial, the isomorphism class of the object~$X_z$ is not in the image of the characteristic homomorphism.
\end{proposition}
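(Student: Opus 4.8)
The plan is to reduce the statement to the obstruction computation already carried out for the single simple object $X_z$ and then to read off non-vanishing directly from the hypothesis on $\gamma_{\omega,z}$.

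First I would record why $[\,X_z\,]$ is even a candidate, i.e. why it lies in the target of the characteristic homomorphism. Since $z\in\rmZ(G)$, the fusion rule gives~$X_g\ot X_z=X_{gz}=X_{zg}=X_z\ot X_g$, so $[\,X_z\,]$ is a central element of the monoid~$\Iso(\bfF)=\bbN G$ and hence an element of~$\rmE_2^{0,0}\cong\rmZ(\bfHo(\bbB(\bfF)))$. Thus it is meaningful to ask whether $[\,X_z\,]$ lies in the image of~$\Iso(\bfZ(\bbB(\bfF)))\to\rmZ(\bfHo(\bbB(\bfF)))$, and the content of the proposition is precisely that an element which is central in the classifying category need not lift to the Drinfeld center.

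Next I would translate ``lying in the image'' into ``admitting a half-braiding.'' The characteristic homomorphism sends the class of a central object~$(P,p)$ to the isomorphism class~$[\,P\,]$ of its underlying object. Because~$X_z$ is simple, any object~$P$ with~$[\,P\,]=[\,X_z\,]$ in~$\Iso(\bfF)$ satisfies~$P\cong X_z$; transporting the half-braiding along such an isomorphism shows that~$[\,X_z\,]$ is in the image exactly when~$X_z$ itself admits a family~$p$ of isomorphisms making~$(X_z,p)$ an object of~$\bfZ(\bbB(\bfF))$. Equivalently, by the discussion of the obstruction differential, $[\,X_z\,]$ lies in the image if and only if~$\rmd_2$ vanishes on~$X_z$. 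I would then invoke the explicit computation preceding the statement: a half-braiding on the simple object~$X_z$ is given by isomorphisms~$\phi_g\colon X_g\ot X_z\to X_z\ot X_g$, which are scalars~$\phi_g\in\frK^\times$ because~$gz=zg$, and the compatibility condition~\eqref{eq:triangle} (commutativity of~\eqref{obst} with~$Z=X_z$) amounts exactly to the equation~$\del(\phi)=\gamma_{\omega,z}$. Such a~$\phi$ exists if and only if the class of~$\gamma_{\omega,z}$ is trivial in~$\rmH^2(G,\frK^\times)$; since it is non-trivial by hypothesis, no half-braiding exists, $\rmd_2$ does not vanish on~$X_z$, and~$[\,X_z\,]$ is not in the image.

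The step that carries the real weight is the middle one: the reduction from membership in the image to the existence of a half-braiding on~$X_z$ relies essentially on the simplicity of~$X_z$, which rules out the possibility that some reshuffled or larger object in the same isomorphism class could secretly carry a central structure. For a non-simple underlying object this reduction is genuinely more delicate, and it is exactly the phenomenon governed by the representation theory of the twisted group algebra~$\frK^{\gamma_{\omega,z}}G$ that controls the central structures on the sums~$X_z^{\oplus m}$.
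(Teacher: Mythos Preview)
Your argument is correct and follows essentially the same route as the paper: the proposition is stated immediately after the paragraph computing that a central structure on~$X_z$ exists if and only if the class of~$\gamma_{\omega,z}$ vanishes in~$\rmH^2(G,\frK^\times)$, and is meant to be read as a direct consequence of that computation. Your only addition is the explicit remark that simplicity of~$X_z$ guarantees any preimage under the characteristic homomorphism has underlying object isomorphic to~$X_z$ itself; this is a welcome clarification that the paper leaves implicit.
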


\begin{example}
Let~$n\geqslant2$ be any integer, and set~$G=(\bbZ/n)^3$. We denote a~$\bbZ/n$-basis of the abelian group~$G$ by~$\{e_1,e_2,e_3\}$, and we write~$\zeta$ 
for a primitive~$n$-th root of unity in our field~$\frK$.
We consider the~$3$-cocycle~$\omega$ defined by
\[
\omega(e_1^{a_{11}}e_2^{a_{12}}e_3^{a_{13}},e_1^{a_{21}}e_2^{a_{22}}e_3^{a_{23}},e_1^{a_{31}}e_2^{a_{32}}e_3^{a_{33}}) = \zeta^{a_{11}a_{22}a_{33}}.
\]
in~$\rmH^3(G,\frK^{\times})$. It arises as the cup product of three~$\bbZ/n$-linearly independent elements in the group~$\rmH^1(G,\frK^{\times})\cong(\bbZ/n)^{\oplus3}$. Since the group~$G$ is abelian, every element is central; we choose~$z=e_1$. A direct calculation shows that 
\[
\gamma_{\omega,z}(e_1^{a_{11}}e_2^{a_{12}}e_3^{a_{13}},e_1^{a_{21}}e_2^{a_{22}}e_3^{a_{23}}) = \zeta^{a_{12}a_{23} + a_{11}a_{22}},
\]
which is non-trivial. Furthermore, the Wedderburn decomposition of the algebra~$\frK^{\gamma_{\omega,z}}G$ is given by
\[
\frK^{\gamma_{\omega,z}}G\cong\bigoplus_{i=1}^n\rmM_n(\frK)
\]
Thus, the class of the~$m$-fold direct sum~$X_z^{\oplus m}$ will be in the image of the characteristic homomorphism if and only if~$m$ is a multiple of~$n$.
\end{example}


\section{The bicategory of rings and bimodules}\label{sec:bimodules}

In this section, we discuss another important example of a bicategory, this time one that has several objects, and that is not a (strict)~$2$-category, 
the bicategory~$\bbM$ of rings and bimodules, see~\cite[2.5]{Benabou} and~\cite[XII.7]{MacLane}. The objects in~$\bbM$ are the (associative) rings~$A, B, C,\dots$~(with unit). The category~$\bbM(A,B)$ is the category of~$(B,A)$-bimodules~$M$ and their homomorphism. (As before in Section~\ref{sec:groups}, some size limitation on the rings and bimodules is needed to keep the categories and their centers relatively small. We will not further comment on this, since it is again inessential to our calculations.) Composition in~$\bbM$ is given by the tensor product of bimodules, and the~$(A,A)$-bimodule~$A$ is the identity object in the category~$\bbM(A,A)$. Every morphism~$f\colon A\to B$ in the~(ordinary) category of rings gives rise to a~$(B,A)$-bimodule~$B_f$, where~$A$ acts via~$f$. The identity object corresponds to the identity~$\id_A$ in the category~$\bbM(A,A)$. The~(ordinary) category of rings has the ring~$\bbZ$ as an initial object, and we use this observation repeatedly in this section. 

We can now begin to study our spectral sequence for the bicategory~$\bbM$. As in Section~\ref{sec:groups}, we start with the (ordinary) center of the classifying category.

\parbox{\linewidth}{\begin{proposition}\label{prop:ZHoM}
The center~$\rmZ(\bfHo(\bbM))$ of the classifying category~$\bfHo(\bbM)$ of the bicategory~$\bbM$ is the abelian monoid of isomorphism classes of abelian groups under the multiplication induced by the tensor product, with the isomorphism class of the infinite cyclic group as unit.
\end{proposition}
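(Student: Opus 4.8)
The plan is to prove that evaluation at the initial ring $\bbZ$ is the asserted isomorphism. First I would unwind the definition of the center: an element of $\rmZ(\bfHo(\bbM))$ is a family $(P(A)\in\Iso\bbM(A,A)\mid A)$ which is natural, meaning
\[
[\,P(B)\otimes_B M\,]=[\,M\otimes_A P(A)\,]
\]
for all rings $A,B$ and every $(B,A)$-bimodule $M$, where the monoid operation is $[\,(P\cdot Q)(A)\,]=[\,P(A)\otimes_A Q(A)\,]$ and the unit is the family of identity bimodules $A$. I would then consider
\[
\Phi\colon\rmZ(\bfHo(\bbM))\longrightarrow\{\,\text{isomorphism classes of abelian groups}\,\},\qquad P\longmapsto[\,P(\bbZ)\,],
\]
which lands in abelian groups because $\bbM(\bbZ,\bbZ)$ is the category of abelian groups. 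This $\Phi$ is a homomorphism of monoids, since $P(\bbZ)\otimes_{\bbZ}Q(\bbZ)$ computes the product and the unit family has $P(\bbZ)=\bbZ$, the infinite cyclic group.

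Surjectivity I would obtain from an explicit section. For an abelian group $G$ set $P_G(A)=A\otimes_{\bbZ}G$, with the evident $(A,A)$-bimodule structure inherited from $A$. This family is central because there are natural isomorphisms
\[
(B\otimes_{\bbZ}G)\otimes_B M\;\cong\;M\otimes_{\bbZ}G\;\cong\;M\otimes_A(A\otimes_{\bbZ}G)
\]
in the $(B,A)$-bimodule $M$, and one checks directly that $P_G\cdot P_H\cong P_{G\otimes_{\bbZ}H}$, that $P_{\bbZ}$ is the unit, and that $P_G(\bbZ)=G$. Thus $G\mapsto P_G$ is a section of $\Phi$, so $\Phi$ is surjective, and $\Phi$ is an isomorphism precisely when every central $P$ already equals $P_{P(\bbZ)}$.

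The whole content is therefore the claim that a central family is determined by $G=P(\bbZ)$, namely that $P(A)\cong A\otimes_{\bbZ}G$ as $(A,A)$-bimodules. The one-sided structure comes for free from the initiality of $\bbZ$. Applying naturality to the morphism $\bbZ\to A$ given by the $(A,\bbZ)$-bimodule $A$ yields $P(A)\otimes_A A\cong A\otimes_{\bbZ}G$, that is $P(A)\cong A\otimes_{\bbZ}G$ as left $A$-modules; applying it to the morphism $A\to\bbZ$ given by the $(\bbZ,A)$-bimodule $A$ yields $P(A)\cong G\otimes_{\bbZ}A$ as right $A$-modules.

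The main obstacle is upgrading these two one-sided isomorphisms to an isomorphism of bimodules, and this is where the genuine work lies. Knowing $P(A)$ as a left and as a right module separately does not determine it as a bimodule: for a ring with an outer automorphism $\sigma$ the twisted bimodule $A_\sigma$ is isomorphic to $A$ on each side but not as a bimodule, so such twists must be excluded using the full centrality condition rather than only the two morphisms through the initial object. My plan here is to detect the right $A$-action as a left action over an auxiliary ring: I would feed naturality the ring homomorphisms $A\rightleftarrows A\ltimes A$ attached to the square-zero extension whose ideal is the regular bimodule, so that the left-module computation over $A\ltimes A$, transported back to $A$, forces the right action on $P(A)$ to be the standard one. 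Equivalently, one may argue that $A_\sigma\otimes_A M\cong M\otimes_A A_\sigma$ can hold for all $M$ only when $\sigma$ is inner. I expect this bimodule rigidity to be the only real difficulty; once $P(A)\cong A\otimes_{\bbZ}P(\bbZ)$ as bimodules is established, the injectivity of $\Phi$, and with it the proposition, follows at once.
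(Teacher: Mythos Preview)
Your strategy is exactly the paper's: evaluate at the initial ring~$\bbZ$, exhibit the section~$G\mapsto(A\otimes_\bbZ G)_A$, and reduce the whole question to showing that a central family is determined by its value~$P(\bbZ)$. The paper performs the same test you do---take~$B=\bbZ$ and the~$(\bbZ,A)$-bimodule~$N=A$---obtains~$P(A)\cong P(\bbZ)\otimes_\bbZ A$ as~$(\bbZ,A)$-bimodules, and then simply asserts that ``up to isomorphism, the entire family is determined by the abelian group~$P(\bbZ)$.'' You are more scrupulous at precisely this point: you correctly note that the test only pins down~$P(A)$ as a one-sided module, and that the twisted bimodules~$A_\sigma$ witness that this is strictly weaker than knowing~$P(A)$ as an~$(A,A)$-bimodule. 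So the gap you flag is real, and the paper's own argument does not visibly close it either.

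That said, your proposed remedies are not yet proofs. The square-zero extension idea is only a plan, with no indication of which naturality square actually forces the right action to be the standard one. Your alternative---arguing that~$A_\sigma\otimes_A M\cong M\otimes_A A_\sigma$ for all~$M$ forces~$\sigma$ to be inner---would at best handle the case~$G\cong\bbZ$, since only then is every~$(A,A)$-bimodule that is free of rank one on each side necessarily of the form~$A_\sigma$; for general~$G$ the bimodules one must rule out are not twists by a single automorphism. So you have located the only genuine difficulty and proposed reasonable lines of attack, but the bimodule-rigidity step still needs to be carried out before the argument is complete.
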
}

\begin{proof}
Consider the monoid
\[
\rmE_1^{0,0}=\prod_{A\in\bbM} \Iso\bbM(A,A).
\]
An element in this monoid is given by a family of isomorphism classes of~$(A,A)$-bimodules~$(\,P(A)\,|\,A\in\bbM\,)$.
The monoid
\[
\rmE_1^{1,0}=\prod_{A,B\in\bbM} \Iso\bfFun (\bbM(A,B),\bbM(A,B))
\] 
contains families of isomorphism classes of endofunctorsof the category of~$(B,A)$-bimodules, one such for every two rings~$A$ and~$B$ in~$\bbM$.
The first differential vanishes on the isomorphism class of the family~$(\,P(A)\,|\,A\in\bbM\,)$ if and only if the two functors~$N\mapsto N\ot_A P(A)$ and~$N\mapsto P(B)\ot_B N$ are isomorphic on the category of~$(B,A)$-bimodules.

Consider in particular the case where~$B=\bbZ$ is the initial ring, and the~$(\bbZ,A)$-bimodule~$N$ is~$A$ itself. In this particular case, we see that
the two~$(\bbZ,A)$-bimodules~\hbox{$A\ot_A P(A)$} and~$P(\bbZ)\ot A$ are isomorphic to each other and hence to~$P(A)$. (Here and in the following we write~$\otimes=\otimes_\bbZ$ for readability.) We see that, up to isomorphism, the entire family is determined by the abelian group~$P(\bbZ)$. Conversely, given any abelian group~$M$, then the family of the~$P(A)=M\otimes A$ admits the structure of a central object. In other words, the submonoid~$\rmE_2^{0,0}$ consists of all isomorphism class of families of the form
\[
(\,P(A)\,|\,A\in\bbM\,)=(\,P(\bbZ)\otimes A\,|\,A\in\bbM\,),
\]
where~$P(\bbZ)$ is an arbitrary abelian group. We thus have~$\rmE_2^{0,0}=\rmZ(\bfHo(\bbM))$. It is clear from the definition of~$\bbM$ that the composition is given by the tensor product and the isomorphism class of the abelian group~$\bbZ$, which corresponds to the family of isomorphism class of the~$(A,A)$-bimodules~$A$, is the unit.
\end{proof}

We remark that we have seen in the proof that the usual symmetry will furnish a central structure on any family of bimodules of the form~$(\,P(A)=M\otimes A\,|\,A\in\bbM\,)$. In particular, this holds for the unit given by the family~$E=(\,E(A)=A\,)$ of~$(A,A)$-bimodules~$A$.

We can now direct our attention to the Drinfeld center of~$\bbM$ itself. We start by determining the automorphism group of its tensor unit.

\begin{proposition}
The automorphism group~$\Aut_{\bfZ(\bbM)}(E,e)$ of the tensor unit~$(E,e)$ in the Drinfeld center~$\bfZ(\bbM)$ of the bicategory~$\bbM$ is cyclic of order~$2$.
\end{proposition}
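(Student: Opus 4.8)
The plan is to reduce the statement to a computation of~$\rmE_2^{0,1}$. By Proposition~\ref{prop:E201=Aut} there is an isomorphism~$\Aut_{\bfZ(\bbM)}(E,e)\cong\rmE_2^{0,1}$, and~$\rmE_2^{0,1}$ is by definition the equalizer of the two homomorphisms~$\rmd_1',\rmd_1''\colon\rmE_1^{0,1}\to\rmE_1^{1,1}$. So it suffices to understand~$\rmE_1^{0,1}=\prod_{A\in\bbM}\Aut_{\bbM(A,A)}(A)$ together with these two differentials. The first step is to identify each factor: an automorphism of the identity object~$\Id(A)$, which is the~$(A,A)$-bimodule~$A$, is determined by the image~$c$ of~$1$, and the requirement that it be a bimodule map forces~$ac=ca$ for all~$a\in A$. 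Hence~$\Aut_{\bbM(A,A)}(A)\cong\rmZ(A)^\times$, the group of units of the center of~$A$, and~$\rmE_1^{0,1}\cong\prod_{A\in\bbM}\rmZ(A)^\times$.

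Next I would make the two differentials explicit. For a family~$u=(u(A)\in\rmZ(A)^\times\,|\,A\in\bbM)$ and an~$(S,R)$-bimodule~$M$ (an object of~$\bbM(R,S)$), the identifications~$\Id(S)\ot M\cong M\cong M\ot\Id(R)$ of the tensor structure turn the component of~$\rmd_1'(u)$ into left multiplication by~$u(S)$ and the component of~$\rmd_1''(u)$ into right multiplication by~$u(R)$. Consequently a family~$u$ lies in the equalizer~$\rmE_2^{0,1}$ if and only if~$u(S)\,m=m\,u(R)$ holds for all rings~$R,S$, all~$(S,R)$-bimodules~$M$, and all~$m\in M$. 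Testing this with~$S=R$ and~$M$ the identity bimodule only recovers the centrality of~$u(R)$, so no information beyond~$\rmE_1^{0,1}$ is obtained from a single ring; the content of the condition is genuinely cross-ring.

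The key step exploits that~$\bbZ$ is an initial object of the category of rings. Taking~$R=\bbZ$, an~$(S,\bbZ)$-bimodule is simply a left~$S$-module, and~$u(\bbZ)$ ranges over~$\rmZ(\bbZ)^\times=\{\pm1\}$. Applying the equalizer condition to the free module~$M=S$ and the element~$m=1$ yields~$u(S)=u(\bbZ)\cdot 1_S$, so every~$u(S)$ is forced to be the scalar~$\pm1$, with a single global sign~$\epsilon=u(\bbZ)$ independent of~$S$. Conversely, any such global sign manifestly satisfies~$u(S)\,m=\epsilon m=m\,u(R)$ for every bimodule. Therefore the equalizer is exactly~$\{\pm1\}\cong\bbZ/2$, and~$\Aut_{\bfZ(\bbM)}(E,e)\cong\bbZ/2$ is cyclic of order~$2$.

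The main obstacle is the bookkeeping in the second step: one must correctly track the left and right module structures and verify that, after the canonical identifications built into the comparison diagram for~$\rmd_1'$ and~$\rmd_1''$, the two differentials really do reduce to one-sided multiplication. Once this is in hand, the computation is rigidified entirely by the initial ring~$\bbZ$; the conceptual point worth emphasizing is that the per-ring data only sees centrality, whereas the existence of~$\bbZ$ together with the equality~$\rmZ(\bbZ)^\times=\{\pm1\}$ is what collapses the a priori enormous product~$\prod_A\rmZ(A)^\times$ down to a single sign.
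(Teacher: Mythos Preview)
Your proof is correct and follows essentially the same approach as the paper: identify~$\rmE_1^{0,1}\cong\prod_A\rmZ(A)^\times$, unwind the equalizer condition, and exploit the initial ring~$\bbZ$ together with~$\bbZ^\times=\{\pm1\}$ to force a single global sign. The only cosmetic difference is that the paper also records the identification~$\rmE_1^{1,1}\cong\prod_{A,B}\rmZ(B\otimes A^{\op})^\times$ and writes the differential as~$(z_A)\mapsto(z_B\otimes z_A^{-1})$ before computing its kernel, whereas you bypass this and work directly with the bimodule condition~$u(S)\,m=m\,u(R)$.
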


\begin{proof}
It is well known (and easy to prove) fact that the automorphism group of~$A$ as an~$(A,A)$-bimodule is canonically isomorphic to the group~$\rmZ(A)^{\times}$ of invertible elements in the center of the ring~$A$. So
\[
\rmE_1^{0,1}\cong\prod_{A\in\bbM}\rmZ(A)^{\times}
\]
follows. Similarly, any automorphism of the identity functor on the category of~$(B,A)$-bimodules, which is isomorphic to the category of~$B\ot A^\op$-modules, is determined by its action on the generator, so that it will be given as multiplication by an invertible element of the center of the ring~$B\ot A^\op$.
We thus have
\[
\rmE_1^{1,1}=\prod_{A,B\in\bbM}\rmZ(B\ot A^\op)^{\times}.
\]
Using these isomorphisms as identifications, the differential~$\rmd_1\colon\rmE_1^{0,1}\rightarrow\rmE_1^{1,1}$ then sends a family~$(\,z_A\,|\,A\in\bbM\,)$ to the family~$(\,z_B^{}\ot z_A^{-1}\,|\,A,B\in\bbM\,)$. It follows that, if the differential on the family~$(\,z_A\,|\,A\in\bbM\,)$ vanishes, then each element~$z_A$ must be the image in~$A$ of some invertible element in (the center of)~$\bbZ$. Because the only such elements in~$\bbZ$ are~$\pm 1$, we conclude that the only families in~$\rmE_2^{0,1}$ are~$(\,1_A\,|\,A\in\bbM\,)$ and~$(\,-1_A\,|\,A\in\bbM\,)$, and thus~$\Aut_{\bfZ(\bbM)}(E,e)\cong \rmE_2^{0,1}\cong\bbZ/2$.
\end{proof}

It remains for us to describe the isomorphism classes of objects in the center~$\bfZ(\bbM)$ of~$\bbM$. We will do this now by showing that the characteristic homomorphism is bijective, so that Proposition~\ref{prop:ZHoM} gives the desired result. 

\begin{proposition}
For the bicategory~$\bbM$ of rings and bimodules, the characteristic homomorphism~$\Iso(\bfZ(\bbM))\to\rmZ(\bfHo(\bbM))$ is an isomorphism. 
\end{proposition}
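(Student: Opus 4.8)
The plan is to show both injectivity and surjectivity of the characteristic homomorphism~$\Iso(\bfZ(\bbM))\to\rmZ(\bfHo(\bbM))$. By Proposition~\ref{prop:E211=kernel}, injectivity amounts to showing~$\rmE_2^{1,1}=0$, while surjectivity amounts to showing that the obstruction differential~$\rmd_2$ vanishes on every element of~$\rmE_2^{0,0}=\rmZ(\bfHo(\bbM))$. Throughout I will exploit the initiality of~$\bbZ$ in the category of rings, exactly as in the proof of Proposition~\ref{prop:ZHoM}: every object~$P$ in~$\rmE_2^{0,0}$ is, up to isomorphism, of the form~$P(A)=M\otimes A$ for a single abelian group~$M=P(\bbZ)$, so the entire family is controlled by its value at the initial ring.

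First I would treat surjectivity, which I expect to be the easy half. Given an element of~$\rmZ(\bfHo(\bbM))$, represented by the family~$P(A)=M\otimes A$, the remark following Proposition~\ref{prop:ZHoM} already observes that the usual symmetry isomorphism of the tensor product over~$\bbZ$ furnishes an explicit central structure~$p$ on this family. Concretely, for a~$(B,A)$-bimodule~$N$ the required isomorphism~$p(N)\colon P(B)\otimes_B N\to N\otimes_A P(A)$ is built from the flip~$M\otimes N\cong N\otimes M$, and the coherence condition~\eqref{eq:triangle} for~$p$ reduces to the naturality and hexagon-type compatibilities of the symmetry, which hold on the nose. Hence~$(P,p)$ lies in~$\bfZ(\bbM)$, the obstruction differential~$\rmd_2$ vanishes on~$P$, and the class of~$P$ is in the image of the characteristic homomorphism.

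Next I would treat injectivity by computing~$\rmE_2^{1,1}=\rmZ_1^{1,1}/\rmB_1^{1,1}$. Using the identifications from the previous proposition, namely~$\rmE_1^{1,1}\cong\prod_{A,B}\rmZ(B\otimes A^\op)^\times$, an element~$f\in\rmZ_1^{1,1}$ is a family of central units~$f(A,B)\in\rmZ(B\otimes A^\op)^\times$ satisfying the multiplicativity condition~\eqref{eq:rewrite} with respect to the tensor product of bimodules over the middle ring. I would specialize this cocycle condition to composites involving the initial ring~$\bbZ$: evaluating at a~$(B,A)$-bimodule factored as~$M\otimes_{\bbZ}(\,\cdot\,)$ through~$\bbZ$ forces~$f(A,B)$ to be determined by its restrictions~$f(A,\bbZ)$ and~$f(\bbZ,B)$, and ultimately by the single value at~$\bbZ\otimes\bbZ^\op=\bbZ$, which is a unit~$\pm1$. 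The condition~\eqref{eq:rewrite} then pins down~$f$ to come from a family in~$\rmE_1^{0,1}=\prod_A\rmZ(A)^\times$, exhibiting it as an element of the image~$\rmB_1^{1,1}=\Ker\,\rmd_1$'s complement --- precisely as in the analogous argument for~$\bbG$, where each cocycle was shown to lie in the image of~$\rmd_1$ by evaluating at the initial object. I expect this reduction-to-$\bbZ$ step to be the main obstacle: one must check carefully that the coherence~\eqref{eq:rewrite}, when fed the factorization through~$\bbZ$, genuinely forces~$f(A,B)=z_B\otimes z_A^{-1}$ for a well-defined family~$(z_A)$, so that~$\rmZ_1^{1,1}=\rmB_1^{1,1}$ and hence~$\rmE_2^{1,1}=0$. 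Combining vanishing of~$\rmE_2^{1,1}$ with the surjectivity established above yields that the characteristic homomorphism is an isomorphism.
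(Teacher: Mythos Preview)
Your proposal is correct and follows essentially the same route as the paper: surjectivity via the symmetry of the tensor product over~$\bbZ$ giving an explicit central structure on~$P(A)=M\otimes A$, and injectivity via~$\rmE_2^{1,1}=0$ by specializing the cocycle condition on~$f\in\rmZ_1^{1,1}$ to the initial ring~$\bbZ$ to force~$f(A,B)=z_B\otimes z_A^{-1}\in\rmB_1^{1,1}$. The aside about the value at~$\bbZ\otimes\bbZ^\op$ being~$\pm1$ is a distraction (the family~$(z_A)$ need not be globally~$\pm1$; that computation belongs to~$\rmE_2^{0,1}$, not here), but it does not affect your argument.
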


\begin{proof}
We have already explained the fact that an element in the center~$\rmZ(\bfHo(\bbM))$ of the classifying category~$\bfHo(\bbM)$ admits a structure of a central object that lives in~$\bfZ(\bbM)$. Therefore, all obstructions must vanish and the characteristic homomorphism is surjective.

The group~$\rmZ_1^{1,1}$ is the subgroup that consists of all families of invertible central elements~$z_{A,B}\in\rmZ(B\ot A^\op)^{\times}$
that satisfy the following condition: For every three rings~$A,B$ and~$C$, the element 
\[
(\id_C\ot m_B\ot\id_A)(z_{B,C}\ot z_{A,B})\in C\ot B\ot A^{\op}
\]
is equal to~$z_{A,B}\ot 1$, where we identify~$C\ot B\ot A^\op$ with~$C\ot A^\op\ot B$ using the symmetry, and where we denote the multiplication~$B\otimes B\to B$ by~$m_B$.

Similarly to before, by studying this equation for the special case~$B=\bbZ$ 
we learn that each such family must be of the form~$z_{A,B}=z_B^{}\ot z_A^{-1}$ for some invertible central elements~\hbox{$z_A\in\rmZ(A)^\times$}: Set~$z_A=z_{A,\bbZ}$ and note that~$z_{\bbZ,A}=z_A^{-1}$. 
In other words, a family in the group~$\rmZ_1^{1,1}$ will automatically be in the image of the differential~$\rmd_1$, which is the subgroup~$\rmB_1^{1,1}$. We conclude that the group~$\rmE_2^{1,1}$ is trivial, and therefore the characteristic homomorphism is also injective in this case.
\end{proof}

The following statement summarizes the results of this section. 

\begin{theorem}
The Drinfeld center~$\bfZ(\bbM)$ of the bicategory~$\bbM$ of rings and bimodules is isomorphic to the category of abelian groups with the monoidal structure given by the usual tensor product of abelian groups, and the unit given by the group of integers.
\end{theorem}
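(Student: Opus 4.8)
The plan is to exhibit an explicit braided monoidal equivalence $\Phi\colon\mathbf{Ab}\to\bfZ(\bbM)$ from the category of abelian groups and then read off the statement from the preceding propositions. On objects, $\Phi$ sends an abelian group $M$ to the central object $(P_M,p_M)$ whose underlying family is $P_M(A)=M\ot A$, the $(A,A)$-bimodule obtained by extension of scalars along the unique map $\bbZ\to A$, and whose structure isomorphisms $p_M$ are those induced by the symmetry of $\ot=\ot_\bbZ$. This is precisely the central structure exhibited in the remark following Proposition~\ref{prop:ZHoM}, so $\Phi(M)$ is indeed an object of $\bfZ(\bbM)$. On morphisms, $\Phi$ sends a homomorphism $f\colon M\to M'$ to the family $(f\ot\id(A)\colon M\ot A\to M'\ot A\mid A\in\bbM)$, which defines a morphism in $\bfZ(\bbM)$ because $f\ot\id$ commutes with the symmetry.

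Next I would verify that $\Phi$ is monoidal. The canonical isomorphism $(M\ot A)\ot_A(M'\ot A)\cong(M\ot M')\ot A$ of $(A,A)$-bimodules assembles into an isomorphism $\Phi(M)\ot\Phi(M')\cong\Phi(M\ot M')$, and $\Phi(\bbZ)$ is the tensor unit $(E,e)$, since $\bbZ\ot A=A=\Id(A)$. The coherence diagrams for $\Phi$ reduce to the coherence of $\ot_\bbZ$ together with the naturality of the symmetry, so they commute. Because the braiding of $\bfZ(\bbM)$ is given by the structure isomorphisms $p$, which here are the symmetry, $\Phi$ carries the braiding to the standard symmetry on $\mathbf{Ab}$; hence $\Phi$ is braided, in fact symmetric, monoidal.

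Essential surjectivity is then immediate from what has already been established. By the preceding proposition the characteristic homomorphism $\Iso(\bfZ(\bbM))\to\rmZ(\bfHo(\bbM))$ is a bijection, and by Proposition~\ref{prop:ZHoM} its target is the monoid of isomorphism classes of abelian groups. Since $\Phi(M)$ maps under the characteristic homomorphism to the class of $P_M(\bbZ)=M$, the composite is the identity on isomorphism classes of abelian groups; as the characteristic homomorphism is bijective, every object of $\bfZ(\bbM)$ is isomorphic to some $\Phi(M)$.

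It remains to prove that $\Phi$ is fully faithful, and this is the step I expect to demand the most care. Faithfulness is easy, since evaluating a morphism $\Phi(f)$ at $A=\bbZ$ recovers $f$. For fullness, let $\phi\colon\Phi(M)\to\Phi(M')$ be an arbitrary morphism in $\bfZ(\bbM)$, a compatible family $\phi(A)\colon M\ot A\to M'\ot A$, and put $f=\phi(\bbZ)$; the goal is to show $\phi(A)=f\ot\id(A)$ for every ring $A$. I would feed the compatibility square defining a morphism of central objects the bimodule $N=A$, viewed as the $(A,\bbZ)$-bimodule coming from the initial ring $\bbZ$. Using that $p_M$ and $p_{M'}$ are the symmetry, the square collapses to the relation $\phi(A)(m\ot a)=f(m)\ot a$ on simple tensors, which generate $M\ot A$ as an abelian group, forcing $\phi=\Phi(f)$. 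The only genuine obstacle is the bookkeeping with bimodule conventions and horizontal composition, but once the initial object $\bbZ$ is used as a probe the identification is forced. Combining full faithfulness with essential surjectivity shows that $\Phi$ is an equivalence of categories, and being symmetric monoidal it identifies $\bfZ(\bbM)$ with $\mathbf{Ab}$ as claimed.
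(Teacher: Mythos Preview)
Your argument is correct, and it is in fact more complete than what the paper offers. In the paper the theorem is stated as a summary of the section: the three propositions compute the center~$\rmZ(\bfHo(\bbM))$, the automorphism group~$\Aut_{\bfZ(\bbM)}(E,e)\cong\bbZ/2$, and show the characteristic homomorphism is a bijection, and the theorem is then asserted without a separate proof. These results pin down~$\Iso(\bfZ(\bbM))$ and the automorphism group of the unit, but they do not by themselves establish an equivalence of categories with~$\mathbf{Ab}$, since full faithfulness of the implicit functor is never checked. Your approach makes this explicit: you write down the functor~$\Phi$ (which is the one hinted at in the remark after Proposition~\ref{prop:ZHoM}), deduce essential surjectivity from the paper's propositions exactly as intended, and then supply the missing fullness argument by probing with the~$(A,\bbZ)$-bimodule~$A$ coming from the initial ring. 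That step---using~$\bbZ$ as a test object to force~$\phi(A)=f\ot\id(A)$---is the same trick the paper uses inside the proofs of the propositions, so it is entirely in the spirit of the section; you have simply carried it one step further to justify the categorical equivalence rather than only the bijection on isomorphism classes.
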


\begin{remark}
The obvious generalization from the bicategory of rings (that is~$\bbZ$-algebras) to the bicategory of~$R$-algebras~(over a given commutative ring~$R$) can be proven with the same techniques; this only requires a more ornamented notation. 
\end{remark}


\section*{Acknowledgment}

Both authors were supported by the Danish National Research Foundation through the Centre for Symmetry and Deformation (DNRF92).



\vfill

\parbox{\linewidth}{%
Ehud Meir\\
Department of Mathematical Sciences\\
University of Copenhagen\\
2100 Copenhagen \O\\
DENMARK\\
\href{mailto:meir@math.ku.dk}{meir@math.ku.dk}\\
\phantom{}\\
Markus Szymik\\
Department of Mathematical Sciences\\
NTNU Norwegian University of Science and Technology\\
7491 Trondheim\\
NORWAY\\
\href{mailto:markus.szymik@math.ntnu.no }{markus.szymik@math.ntnu.no }}

\end{document}